\theoremstyle{remark}
\newtheorem{rmk}{Remark}[section]
\theoremstyle{plain}
\newtheorem{ex}{Example}[section]
\newtheorem{lem}{Lemma}[section]
\newtheorem{thm}{Theorem}[section]
\newtheorem{prop}{Proposition}[section]
\theoremstyle{definition}
\newtheorem{defin}{Definition}[section]
\newtheorem{hyp}{Hypothesis}[section]
\newcommand{\Kk}{\mathcal K}
\newcommand{\N}{\mathbb N}
\newcommand{\R}{\mathbb R}
\newcommand{\E}{\mathbb E}
\newcommand{\Hh}{\mathbb H}
\newcommand{\K}{\mathbb K}
\newcommand{\Pp}{\mathbb P}
\newcommand{\F}{\mathcal F}
\newcommand{\U}{\mathcal U}
\newcommand{\Uu}{\mathbb U}
\DeclarePairedDelimiter{\abs}{\lvert}{\rvert}
\DeclarePairedDelimiter{\norma}{\lVert}{\rVert}
\DeclarePairedDelimiter{\prodscal}{\langle}{\rangle}
\numberwithin{equation}{section}
\title[A class of Semi-Linear Backward Parabolic Cauchy Problems]{A Semi-Linear
Backward Parabolic cauchy Problem with Unbounded Coefficients of
Hamilton-Jacobi-Bellman Type and Applications to optimal control}
\author{Davide Addona}
\address{Dipartimento di Matematica, Universit\`a degli Studi di Milano Bicocca,
Via Cozzi, 53, I-20155 Milano (Italy)}
\email{d.addona@campus.unimib.it}
\begin{document}

\subjclass[2000]{Primary: 35K58, 49L99; Secondary 47F05, 34F05}

\keywords{semi-linear parabolic equations, weighted gradient uniform estimates, unbounded coefficients, Hamilton-Jacobi-Bellman equation, forward-backward stochastic differential equations, stochastic optimal control}

\begin{abstract}
We obtain weighted uniform estimates for the gradient of the solutions to a class of linear parabolic Cauchy problems with unbounded coefficients. Such estimates are then used to prove existence and uniqueness of the mild solution to a semi-linear backward parabolic Cauchy problem, where the differential equation is the Hamilton-Jacobi-Bellman equation of a suitable optimal control problem. Via backward stochastic differential equations, we show that the mild solution is indeed the Value Function of the controlled equation and that the feedback law is verified.
\end{abstract}

\maketitle

\section{Introduction}

The aim of this paper is to study of the backward parabolic Cauchy problem
(BPDE) of HJB type
\begin{equation}
\left\{
\begin{array}{ll}
D_tv(t,x)+Av(t,x)=\psi(x,G(x)\nabla
v(t,x)), \quad t\in[0,T), & x\in\R^N, \\
\\
v(T,x)=\varphi(x), & x\in\R^N,
\end{array}
\right.
\label{int_eq:BPDE}
\tag{BPDE}
\end{equation}
by analytic methods, and show some of its applications to stochastic optimal
control problems.

Here, $A$ is the uniformly elliptic differential operator defined on smooth
functions $f$ by
\[
Af(x)=\frac12Tr[G(x)G(x)D^2_xf(x)]+ \prodscal{B(x),\nabla f(x)},
\]
where $G:\R^N\longrightarrow\R^N\times\R^N$, $B:\R^N\longrightarrow\R^N$, $\psi$ is a continuous function which satisfies some additional conditions
and $\varphi$ is a bounded and continuous function in
$\R^N$ (for short $f\in C_b(\R^N)$). The particular form of the nonlinear term
arises naturally in the theory of stochastic control (see \cite{fuhr-tess2}, \cite{pardoux-peng}), but it has also an
analytic interest.

In recent years much attention has been paid to the uniformly elliptic operator
$A$, with unbounded coefficients in $\R^N$, since they naturally appear in the
theory of Markov processes. If $f\in C_b(\R^N)$, under suitable hypothesis the
Cauchy problem
\begin{equation}
\left\{
\begin{array}{ll}
D_tu(t,x)=Au(t,x), \quad t>0, & x\in\R^N, \\
\\
u(0,x)=f(x), & x\in\R^N,
\end{array}
\right.
\label{int_eq:cauchy_problem_omo}
\end{equation}
has a unique bounded solution. Moreover, there exists a semigroup
$\{S(t)\}_{t\geq0}$ of linear operators in $C_b(\R^N)$ such that
$u(t,x)=S(t)f(x)$. In general, this semigroup is neither strongly continuous nor
analytic (see \cite{meta-prio}). The problem of estimating the gradient of the solution
$u$ has already been studied in literature by both analytic (\cite{bert-forn}, \cite{bert-forn-lor}, \cite{bert-forn-lor2}, \cite{bert-lor}, \cite{daprato},
\cite{lunardi}) and probabilistic methods
(\cite{cerrai}, \cite{delarue}).

In this paper, under suitable assumptions on the coefficients of the operator $A$, we
prove the existence and uniqueness of a mild solution to problem
\eqref{int_eq:BPDE}. This is not a straightforward task since both $G$ and $B$
may be unbounded. More precisely, let $\{S(t)\}_{t\geq0}$ be the semigroup associated to
the Cauchy problem \eqref{int_eq:cauchy_problem_omo}, and $F$ be the functional
defined by
\[
F(t,u)(x)=\psi(x,G(x)\nabla u(t,x)), \quad t\in[0,T), \ x\in\R^N,
\]
for suitable functions $u$. We show that the functional
\[
(\Gamma v)(t,x):=S(T-t)\varphi(x)-\int_t^TS(r-t)F(r,v)(x)dr,
\]
admits a unique fixed point if $v\in C_b([0,T]\times\R^N)\bigcap C^{0,1}([0,T)\times\R^N)$ which
satisfies the following growth condition:
\[
\norma{G\nabla v(t,\cdot)}_\infty\leq\frac C{(T-t)^{1/2}}\norma\varphi_\infty,
\quad t\in[0,T),
\]
for some $C>0$.

The novelty and issue of this estimate is the presence of an unbounded coefficient which multiplies the gradient; indeed no unbounded function is present in the classical gradient estimates, obtained both by analytic and probabilistic methods. As one expects, the presence of this term yields to additional growth conditions on the coefficients of the operator $A$, since we require not only that the gradient of the solution is bounded, but that its decreasing could balance the growth of $G$.


%
%
%

As it is well known Equation \eqref{int_eq:BPDE} is the Hamilton Jacobi Bellman (HJB) equation
corresponding to an optimal stochastic control problem. Namely If $\varphi\in BUC(\R^N)$ the regularity of the mild
solution $v$ allows us to show that it is
exactly the Value Function associated to the control problem given by the state equation
\begin{equation}
\left\{
\begin{array}{ll}
D_\tau X^u_\tau=B(X^u_\tau)d\tau+G(X^u_\tau)r(X^u_\tau,u_\tau)d\tau+G(X^u_\tau)dW_\tau, &
\tau\in[t,T], \\
\\
X_t^u=x\in\R^N,
\end{array}
\right.
\label{int:controlled_eq}
\end{equation}
and the cost functional is
\begin{equation}
\E\int_0^Tl(X^u_t,u_t)dt+\E\varphi(X^u_T),
\label{int_eq:cost_functional}
\end{equation}
where $l,\varphi$ are measurable functions.

The existence of $\nabla_xv$ and the estimate on its growth allow us to
identify the optimal feedback law for the control problems.

The key tool to link (BPDE) and the controlled equation are the backward
stochastic differential equations. This connection was proved in the paper
\cite{pardoux-peng} for the finite dimensional case and for classical
solutions of the parabolic Cauchy problem
\[
\left\{
\begin{array}{lll}
\displaystyle \frac{\partial u}{\partial t}(t,x)+\mathcal
Lu(t,x)+f(t,x,u(t,x),(\nabla u\ \sigma)(t,x))=0, & t\in[0,T], & x\in\R^N, \\
\\
u(T,x)=g(x), & & x\in\R^N,
\end{array}
\right.
\]
where
\[
\mathcal L=\frac12\sum_{i,j=1}^N(\sigma\sigma^*)_{ij}(t,x)\frac{\partial
^2}{\partial x_ix_j}+\sum_{i=1}^Nb_i(t,x)\frac{\partial}{\partial x_i},
\]
$\sigma$ is a $(N\times d)-$matrix valued function and $b_i$ are scalar functions, for $i=1,\ldots,N$.
For the infinite dimensional case, we refer to \cite{fuhr-tess2} where the authors prove that the solution to the backward stochastic differential equation is also the
unique mild solution of a suitable partial differential equation.

It is well known that the forward backward stochastic differential equation we deal with, which is
\begin{equation}
\left\{
\begin{array}{ll}
dY_\tau=\psi(X_\tau,Z_\tau)d\tau+Z_\tau dW_\tau, & \tau\in[t,T], \\
\\
dX_\tau=B(X_\tau) d\tau+G(X_\tau) dW_\tau, & \tau\in[t,T], \\
\\
Y_T=\varphi(X_T), \\
\\
X_t=x, & x\in\R^N,
\end{array}
\right.
\label{int_eq:FBSDE}
\tag{FBSDE}
\end{equation}
has a solution $(X,Y,Z)$ with $X,Y,Z$ belonging to some suitable spaces,
and under opportune regularity and growth assumptions on $\psi,B,G,\varphi$ the
processes $Y$ and $Z$ are indeed $v$ and $G\nabla v$, respectively (see
\cite{pardoux-peng}). These assumptions are quite strong, since they require
differentiability for $G,B,f$ and $g$. Our analytic results allow us to obtain
these identifications relaxing the hypotheses on the terms of the Cauchy
problem, and so to study the control problem in a more general setting. We also
notice that the needed regularity could be obtained by Bismut-Elworthy formula
but such an approach was exploited in letterature, at our best knowledge, only
in the case of a bounded diffusion, see \cite{cerrai} and \cite{fuhr-tess}.

The paper is organized as follows. In Section \ref{sec:BPDE} we prove the
existence and uniqueness of a mild solution to (BPDE), and study some of its
regularity properties.

In the first subsection, we show that the estimate
\[
\norma{G\nabla S(t)\varphi}_\infty\leq \frac
C{t^{1/2}}\norma\varphi_\infty,
\quad \forall t\in(0,T],
\]
holds for any $\varphi\in C_b(\R^N)$, any $T>0$ and some positive constant $C=C(T)$.

In the second subsection, we prove some regularity results for the mild solution $v$ of \eqref{int_eq:BPDE}. Moreover, a classical fixed point argument shows the existence and uniqueness of a local solution to the Cauchy problem \eqref{int_eq:BPDE}, solution which can be extended to the line $(-\infty,T]$.

The second part of the paper is devoted to the study of \eqref{int_eq:FBSDE}
which, as we stressed above, is the key tool to prove that $v$ is indeed the Value Function associated
to problem \eqref{int:controlled_eq}.

Finally, in Section $4$ we introduce the stochastic controlled equation.
The regularity of $v$ and the solvability of \eqref{int_eq:FBSDE} enable us to prove that $v$ is the
value function and that, under suitable assumptions, the feedback law is
verified.

\subsubsection*{{\bf{Notation}}}
Throughout the paper we denote by $B(R)$ the open ball in $\R^N$ with center at
$x=0$ and radius $R$, and by $\overline{B(R)}$ its closure.

\section{The Semi-Linear PDE}
\label{sec:BPDE}

Let us consider the backward Cauchy problem
\begin{equation}
\left\{
\begin{array}{ll}
D_tu(t,x)+Au(t,x)=\psi(x,G(x)\nabla
u(t,x)), \quad t\in[0,T), & x\in\R^N, \\
\\
u(T,x)=\varphi(x), & x\in\R^N,
\end{array}
\right.
\label{eq:eq_Cauchy_NL}
\end{equation}
where $A$ is the second order elliptic operator, defined on smooth functions
$f:\R^N\longrightarrow\R$ by
\begin{equation}
Af(x)=\frac12Tr[Q(x)D^2_xf(x)]+\prodscal{B(x),\nabla
f(x)},
\label{eq:op_diff_A}
\end{equation}
$Q(x)$ is a positive defined matrix for any $x\in\R^N$, $G=\sqrt Q$, $\varphi\in C_b(\R^N)$, and $\psi$ is a continuous function, which
satisfies the following conditions:
\begin{hyp}
\begin{description}
\item [(i)] $\psi(\cdot,0)$ is bounded in $\R^N$.

\item[(ii)] For some constant $L_\psi>0$ and any $x,x_1,x_2,y_1,y_2\in\R^N$ we have
\begin{equation}
\begin{split}
\abs{\psi(x_1,x_2)-\psi(y_1,y_2)} & \leq
L_\psi\abs{x_2-y_2}+
L_\psi\abs{x_1-y_1}\left(1+\abs{x_2}+\abs{y_2}\right), \\
\abs{\psi(x,0)} & \leq L_\psi.
\end{split}
\label{eq:g_lipschitz}
\end{equation}
\end{description}
\label{hyp:g_lipschitz}
\end{hyp}

We introduce some definitions, to enlighten the computations in the next pages:
for any $i=1,\ldots,N$ and any $x\in\R^N$, we set
\begin{align}
f_i(x) & = \abs*{\sum_{j=1}^NQ_{ij}(x)\left(D_jG(x)\right)G^{-1}(x)}, \label{eq1:appr_fi} \\
h(x)^\gamma & = \sum_{j,k,l,m=1}^N\abs*{G_{jk}(x)D_kG_{lm}(x)}^\gamma, \quad\forall \gamma>0, \label{eq1:appr_h} \\
l^i_R(x) & =\frac{1}{1+R^2}\abs*{\sum_{j=1}^NQ_{ij}(x)x_j}, \quad \forall R\geq1 \label{eq1:appr_ljR}.
\end{align}

Now we can state the growth hypotheses on the coefficients $Q_{ij}(x), B_i(x)$;
\begin{hyp}
\begin{description}
\item [(i)] $B_i\in C^1(\R^N)$ and $Q_{ij}\in
C^2(\R^N)$, for any $i,j=1,\ldots,N$.

\item [(ii)]
$Q(x)$ is a uniformly positive-definite matrix, i.e. there exist a positive
function $\nu$ and a constant $\nu_0>0$, such that $\nu(x)\geq\nu_0>0$, for any
$x\in\R^N$, and
\[
\prodscal{Q(x)\xi,\xi}\geq\nu(x)\abs\xi^2, \quad \textrm{for any }\xi,x\in\R^N,
\]
and $B$ is uniformly dissipative, which means that
\[
\prodscal{B(x)\xi,\xi}\leq0, \quad \forall \xi\in\R^N, \quad ,\forall x\in\R^N;
\]

\item [(iii)]
There exist a positive function $b$ and a positive constant $b_0$ such that
\begin{equation}
-M(x)\geq b(x)\geq b_0>0,
\label{hyp0:DB}
\end{equation}
where $M$ is the matrix-valued function defined by
\begin{equation}
M:=G\left(DB\right)G^{-1}-\sum_{i,j=1}^NQ_{ij}\left(D_{ij}G\right)G^{-1}
-\sum_{j=1}^NB_j\left(D_jG\right)G^ {-1};
\end{equation}

\item [(iv)]
growth conditions on $Q$ and $B$: there exist positive constants $K_j$, $j=1,\ldots,6$, $C_n$
($n\in\N$), and $\delta\in[0,3/2]$, $\alpha,\beta\in[0,2]$ such that

\begin{align}
&\max_{j=1,\ldots,N}\abs*{\sum_{i=1}^NQ_{ij}(x)x_i}^\delta\leq K_1(1+\abs x^2)^\delta\nu(x)
\label{hyp1:growth_Q}, \ \forall x\in\R^N,\\
\notag\\
& K_2\sum_{j=1}^N\abs*{Q_{ij}(x)x_i}l_R(x)^{3-2\delta}
+4\abs*{x_i}f_i(x)+x_iB_i(x)\leq K_3(1+\abs x^2), \label{hyp3:growthB} \\
& \qquad \forall i=1,\ldots,N, \quad \forall \abs x\leq R, \quad R\geq1,
\notag \\
& K_4\left[\left(\frac{\langle Q(x)x,x\rangle}{1+\abs x^4}\right)^2
+\left(\frac{Tr(Q(x))}{1+\abs x^2}\right)^2\right]-b(x)\leq K_5,  \quad \forall
x\in\R^N,
\label{hyp4:growthDB_1}\\
\notag\\
& \left\{
\begin{array}{ll}
\displaystyle n\left(\sum_{i=1}^Nf_i(x)^\alpha+h(x)^\beta\right)-b(x)\leq C_n, & \forall
x\in\R^N,\ \forall n\in\N, \\
\\
\displaystyle
f_i(x)^{2-\alpha}\leq K_6\nu(x), & \forall x\in\R^N, \quad i=1,\ldots,N, \\
\\
\displaystyle
h(x)^{2-\beta}\leq K_7\nu(x), & \forall x\in\R^N.
\end{array}
\right.
\label{hyp5:growthDB_2}
\end{align}
\end{description}
\label{hyp:maximum_principle}
\end{hyp}

Under these hypotheses, the Cauchy problem
\[
\left\{
\begin{array}{ll}
D_tu(t,x)=Au(t,x), \quad t>0, & x\in\R^N, \\
\\
u(0,x)=\varphi(x), & x\in\R^N,
\end{array}
\right.
\]
admits a classical solution
\[
u\in C([0,\infty)\times\R^N)\cap C_{loc}^{1+\delta/2,2+\delta}((0,\infty)\times\R^N)
\]
for any $\delta\in(0,1)$ satisfying
\[
\abs*{u(t,x)}\leq\norma\varphi_\infty, \quad t>0,\ x\in\R^N
\]
(see \cite{meta-pallara-wacker}).

If we assume that there exist $\lambda>0$ and a function $f\in C^2(\R^N)$ such
that
\[
\lim_{\abs x\rightarrow+\infty}f(x)=\infty, \quad \sup_{x\in\R^N}(Af(x)-\lambda
f(x))<\infty,
\]
then the classical solution is unique, and we can define a family of bounded
operators $\{S(t)\}_{t\ge0}$ by  $S(t)f(x)=u(t,x)$, for any $t\geq0$, $x\in\R^N$. $\{S(t)\}_{t\geq0}$ is
the contractive semigroup of linear operators associated to the
operator $A$ and, in general, $\{S(t)\}_{t\geq0}$ is neither strongly continuous nor analytic in $C_b(\R^N)$ (see \cite{meta-prio}).

Now we introduce a class of function spaces, which is a natural environment
where to set the Cauchy problem \eqref{eq:eq_Cauchy_NL}:
\begin{defin}
\label{def:K_spaces}
For any $a>0$, let us consider the space
\[
\Kk_a=
\left\{
\begin{array}{l}
h\in C_b\left([T-a,T]\times \R^N\right)\cap
C^{0,1}\left([T-a,T)\times\R^N\right): \\
\\
\displaystyle\sup_{{t\in[T-a,T)}\atop{x\in\R^N}}(T-t)^{1/2}
\abs*{G(x)\nabla h(t,x)}<\infty
\end{array}
\right\},
\]
endowed with the norm
\begin{equation}
\norma h_{\Kk_a}=\norma
h_\infty+[h]_{\Kk_a},
\label{eq:normakalpha}
\end{equation}
where
\[
[h]_{\Kk_a}:=\sup_{t\in[T-a,T)}(T-t)^{1/2}\norma{G\nabla
h(t,\cdot)}_\infty.
\]
\end{defin}

For any $a>0$ we define the function $F_a$ in such a way:
\begin{equation}
F_a:[T-a,T)\times \Kk_a\longrightarrow C(\R^N), \quad
F(t,u)(x)=\psi(x,G(x)\nabla u(t,x)).
\label{eq:def_F}
\end{equation}

Throughout this paper we will write $F$ instead of $F_T$.

At this stage formula
\begin{equation}
v(t,x)=S(T-t)\varphi(x)-\int_t^TS(r-t)F(r,v)(x)dr,
\label{eq:mild_solution}
\end{equation}
is just formal. Since $\psi$ and $G$ may be unbounded, to justify this
formula we need first to show that the semigroup $\{S(t)\}_{t\geq0}$ can actually be applied
to $F$.

\subsection{Weighted gradient estimates}
Our purpose here is to prove that, for any $\varphi\in C_b(\R^N)$ and any $t>0$, the
function $x\mapsto G(x)S(t)\varphi(x)$ is bounded in $\R^N$ and that, for any $T>0$, there exists a
positive constant $C_T$ such that
\[
\norma{G\nabla S(t)\varphi}_\infty\leq\frac{C_T}{t^{1/2}}\norma \varphi_\infty,
\quad t\in(0,T].
\]

For this purpose, for any $R\geq1$, we introduce the function $\eta_R$ defined
by $\eta_R(x)=\eta(\abs x/R)$ for any $x\in\R^N$, where
\[
\eta(t)=
\begin{cases}
1, & t\in[0,1/2], \\
\exp{\left(1-\frac{1}{1-(4t-2)^3}\right)}, & t\in(1/2,3/4), \\
0 & t\geq 3/4.
\end{cases}
\]

Clearly, $\eta_R\in C^2_c(\R^N)$, $0\leq\eta_R\leq 1$ in $\R^N$,
$\eta_R\equiv1$ in
$B(R/2)$, and $\eta_R\equiv 0$ outside
the ball $B(R)$. Moreover, we have
\begin{align}
& D_i\eta_R(x)=-\frac{x_i}{\abs xR}\chi_{[1/2,3/4)}(\abs x/R)\frac{12(4\abs
x/R-2)^2}{\left(1-(4\abs x/R-2)^3\right)^2}\eta_R(x), \label{eq:stima_grad_eta}
\\
& \abs*{\sum_{i=1}^NQ_{ij}(x)D_i\eta_R(x)}
\leq K_8l^j_R(x)\eta_R(x)^{1/3}, \label{eq:stima_Q_grad_eta}\\
\notag \\
& \abs*{\sum_{i=1}^NQ_{ij}(x)D_{ij}\eta_R(x)}
\leq K_9\left(\frac{\prodscal{Q(x)x,x}}{1+\abs x^4}
+\frac{\abs*{Tr[Q(x)]}}{1+\abs x^2}\right), \label{eq:stima_Q_hess_eta}
\end{align}
for any $x\in\R^N$ and any $R\geq1$, where $K_8$ and $K_9$ are positive constant independent of $R$.

\begin{rmk}
In the right-hand side of \eqref{eq:stima_Q_grad_eta} as exponent of $\eta_R$ we could choose any number between $(0,1)$. The exponent $1/3$ is enough to prove the following theorem.
\end{rmk}

\begin{thm}
Let Hypothesis \ref{hyp:maximum_principle} be fulfilled and let $\varphi\in
C_b(\R^N)$. If $u$ is the classical solution
to
the homogenous Cauchy problem
\[
\left\{
\begin{array}{ll}
D_tu(t,x)=Au(t,x), \quad t>0, & x\in\R^N, \\
\\
u(0,x)=\varphi(x), & x\in\R^N,
\end{array}
\right.
\]
i.e., $u\in C_b\left([0,\infty)\times\R^N\right)\cap
C^{1,2}\left((0,\infty)\times\R^N\right)$ and it satisfies the above equation and the initial condition,
then the function
\[
(t,x)\mapsto G(x)\nabla u(t,x)
\]
is bounded in $[\epsilon,T]\times\R^N$, for any $0<\epsilon<T$. Moreover,
there exists a positive constant $C_T$ such that
\begin{equation}
t^{1/2}\norma*{G\nabla u(t,\cdot)}_\infty\leq C_T\norma \varphi_\infty, \quad
\forall
t\in(0,T].
\label{eq:stimadergrad0}
\end{equation}
\label{thm:epsilon_Bounded_C}
\end{thm}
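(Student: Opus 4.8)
The plan is to use a Bernstein-type argument: one constructs an auxiliary function combining $u^2$ with the weighted gradient $\Phi := \abs{G\nabla u}^2 = \prodscal{Q\nabla u,\nabla u}$, computes its image under $D_t - A$, and invokes the parabolic maximum principle. Because the coefficients of $A$ are unbounded, the maximum principle cannot be applied directly on $\R^N$; this is precisely the role of the cutoff functions $\eta_R$ and of the estimates \eqref{eq:stima_Q_grad_eta}--\eqref{eq:stima_Q_hess_eta}. Concretely, for a constant $a>0$ to be fixed later I would set
\[
\zeta(t,x) = a\,u(t,x)^2 + t\,\Phi(t,x), \qquad z_R(t,x) = \eta_R(x)\,\zeta(t,x),
\]
and study $z_R$ on the compact cylinder $[0,T]\times\overline{B(R)}$, on whose lateral boundary it vanishes since $\eta_R\equiv 0$ outside $B(R)$.

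A short computation using $u_t = Au$ yields the clean identity $(D_t - A)(u^2) = -\Phi$, whence
\[
(D_t - A)\zeta = (1-a)\Phi + t\,(D_t - A)\Phi.
\]
The core of the proof is the computation of $(D_t - A)\Phi$. Differentiating $\Phi=\prodscal{Q\nabla u,\nabla u}$ and repeatedly inserting the equation, I expect an expression of the shape
\[
(D_t - A)\Phi = -\mathcal{Q}(D^2u,\nabla u) + 2\prodscal{M(x)\,G\nabla u, G\nabla u} + \mathcal{R}(x),
\]
where $\mathcal Q$ is a nonnegative quadratic form bounded below by the uniform ellipticity $\nu$, the matrix $M$ is exactly the dissipativity matrix of \eqref{hyp0:DB}, and $\mathcal R$ collects the remaining terms carrying the derivatives of $G$, encoded by the quantities $f_i$ and $h$ of \eqref{eq1:appr_fi} and \eqref{eq1:appr_h}. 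By \eqref{hyp0:DB} the middle term is bounded above by $-2b(x)\Phi\le -2b_0\Phi$, which supplies the decisive good sign.

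I would then absorb the bad terms in $\mathcal R$: the contributions pairing $\nabla u$ with $D^2u$ are controlled by Young's inequality against the negative form $\mathcal Q$, the splitting being calibrated so that, via $f_i^{2-\alpha}\le K_6\nu$ and $h^{2-\beta}\le K_7\nu$ from \eqref{hyp5:growthDB_2}, the residue is of order $(\sum_i f_i^\alpha + h^\beta)\Phi$. The first line of \eqref{hyp5:growthDB_2}, applied with $n$ large (in terms of $T$ and the ellipticity constants), then lets $-b(x)\Phi$ dominate this residue up to a bounded multiple $C_n\Phi$; taking $a>1+TC_n$ gives $(D_t-A)\zeta\le 0$ on the set where $z_R>0$. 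The cutoff generates the commutator terms $-\zeta\,A\eta_R$ and $-\prodscal{Q\nabla\eta_R,\nabla\zeta}$, which are handled by \eqref{eq:stima_Q_grad_eta} and \eqref{eq:stima_Q_hess_eta} together with the growth conditions \eqref{hyp1:growth_Q}, \eqref{hyp3:growthB} and \eqref{hyp4:growthDB_1}; these are shown either to carry a favourable sign or to be negligible as $R\to\infty$.

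Finally, by the parabolic maximum principle on $[0,T]\times\overline{B(R)}$ the supremum of $z_R$ is attained either at $t=0$, where $z_R=a\eta_R\varphi^2\le a\norma\varphi_\infty^2$, or on the lateral boundary where $z_R=0$, up to the vanishing cutoff contribution. Letting $R\to\infty$ gives $t\,\abs{G\nabla u}^2\le a\norma\varphi_\infty^2$ on $(0,T]\times\R^N$, which is \eqref{eq:stimadergrad0}; boundedness of $G\nabla u$ on $[\epsilon,T]\times\R^N$ follows at once by taking $t\ge\epsilon$ there. The main obstacle is the computation and bookkeeping of $(D_t-A)\Phi$: isolating the matrix $M$ with the correct sign and matching every derivative-of-$G$ term to the exact exponents $\alpha,\beta,\delta$ of Hypothesis \ref{hyp:maximum_principle}, so that the uniform ellipticity and the dissipativity bound jointly absorb them, is the delicate and computation-heavy heart of the argument.
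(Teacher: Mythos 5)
You have correctly identified the paper's strategy: a Bernstein-type argument built on $u^2$ and $t\abs{G\nabla u}^2$, the identity $(D_t-A)(u^2)=-\abs{G\nabla u}^2$, the extraction of the dissipativity matrix $M$ of \eqref{hyp0:DB} from $(D_t-A)\Phi$, Young's inequality calibrated on the exponents $\alpha,\beta,\delta$, and a cutoff--maximum-principle--$R\to\infty$ scheme. The gap is in \emph{where you put the cutoff}. With $z_R=\eta_R\zeta$, $\zeta=au^2+t\Phi$, every good negative term of $(D_t-A)z_R$ is inherited from $\eta_R(D_t-A)\zeta$ and therefore carries the factor $\eta_R$: the Hessian form $-t\eta_R\nu\sum_j\abs{G\nabla (D_ju)}^2$, the dissipative term $-2t\eta_R b\Phi$, and $(1-a)\eta_R\Phi$. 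The commutator terms $-\zeta A\eta_R-\prodscal{Q\nabla\eta_R,\nabla\zeta}$, on the other hand, are supported in the annulus $R/2\leq\abs x\leq 3R/4$, where $\eta_R$ is arbitrarily small. They are neither of favourable sign nor ``negligible as $R\to\infty$'': the maximum principle must be applied for each fixed $R$ with constants independent of $R$, so these terms must be absorbed pointwise, uniformly in $R$. In your setup this fails: for instance $-t\Phi\,Tr[QD^2\eta_R]$ (and similarly $-2au\prodscal{Q\nabla\eta_R,\nabla u}$) is controlled through \eqref{eq:stima_Q_hess_eta} and \eqref{hyp4:growthDB_1} only by quantities of order $\sqrt{K_5+b(x)}$, and $b$, $Q$, $B$ are in general unbounded; since $D^2\eta_R\neq0$ exactly where $\eta_R$ is tiny, and $\eta_R^\theta\gg\eta_R$ there for every $\theta<1$, no multiple of the $\eta_R$-weighted good terms (nor of $z_R$ itself) dominates them. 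Hence the inequality $(D_t-A)z_R\leq Kz_R$ with $K$ independent of $R$, which your final step needs, is out of reach with this localization; this is precisely the difficulty created by the unboundedness of the coefficients.

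The paper's proof is engineered to avoid exactly this. It works with the Dirichlet approximations $u_R$ of \eqref{eq:eq_Cauchy_Dirichlet} and with $v_R=u_R^2+at\eta_R^2\abs{G\nabla u_R}^2$: the $u^2$-part is \emph{not} multiplied by the cutoff (its localization comes from the Dirichlet boundary condition, which makes $v_R$ vanish on the lateral boundary), so the cutoff-free good term $-\abs{G\nabla u_R}^2$ produced by $(D_t-A)(u_R^2)$ is available at full strength on all of $B(R)$ and absorbs the $\eta_R$-free residues generated by Young's inequality (the terms collected in the first bracket of $I_1$); moreover the gradient part carries $\eta_R^2$, so every commutator retains a full power of $\eta_R$ and, after Young, the residues carry $\eta_R^2$, which is what permits the pairing with $-b$ and $-\nu$ through \eqref{hyp4:growthDB_1} and \eqref{hyp5:growthDB_2}. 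A minimal repair of your argument in this spirit: keep $au^2$ outside the cutoff, i.e.\ take $z_R=au^2+t\eta_R^2\Phi$ with the global solution $u$; the lateral boundary values are then not zero but bounded by $a\norma{\varphi}_\infty^2$ (contractivity of the semigroup), which is all the maximum principle needs, and the weight bookkeeping becomes the paper's.
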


\begin{proof}[Proof]
Fix $R\geq1$, $T>0$ and let $u_R\in C_b\left([0,\infty)\times \overline{B(R)}\right)\cap
C^{1,2}\left((0,\infty)\times\overline{B(R)}\right)$ be the solution to the Cauchy Dirichlet problem
\begin{align}
\left\{
\begin{array}{lll}
D_tu_R(t,x)=Au_R(t,x), & t>0, & x\in B(R), \\
\\
u_R(t,x)=0, & t>0, & x\in\partial B(R), \\
\\
u_R(0,x)=\eta_R(x)\varphi(x), & & x\in \overline{B(R)}.
\end{array}
\right.
\label{eq:eq_Cauchy_Dirichlet}
\end{align}
We set
\[
v_R(t,x)=u_R(t,x)^2+at\eta_R^2\abs{G(x)\nabla u_R(t,x)}^2, \quad
t\in[0,T], \quad
x\in \overline{B(R)}.
\]

Function $v_R$ is continuous in its domain, and it solves the Cauchy problem
\begin{align}
\left\{
\begin{array}{lll}
D_tv_R(t,x)-Av_R(t,x)=g_R(t,x), & t\in[0,T], & x\in B(R), \\
\\
v_R(t,x)=0, & t\in[0,T], & x\in\partial B(R), \\
\\
v_R(0,x)= (\eta_R\varphi)^2(x), & & x\in \overline{B(R)},
\end{array}
\right.
\label{eq:eq_Cauchy_Dirichlet2}
\end{align}
where $g_R(t,x)=t\displaystyle\sum_{i=1}^6g_{i,R}(t,x)$ and
\begin{align*}
g_{1,R}
& = -2t^{-1}\abs{G\nabla
u_R}^2-2a\eta_R^2\sum_{i,j=1}^NQ_{ij}\prodscal{G\nabla
(D_iu_R),G\nabla (D_ju_R)} \\
& \quad -2a\eta_R \sum_{i=1}^NB_i D_i\eta_R\abs{G\nabla u_R}^2, \\
\\
g_{2,R}
& = 2a\eta_R^2\prodscal{G(DB)\nabla u_R,G\nabla u_R}-2a\eta_R^2\sum_{i,j=1}^NQ_{ij}\prodscal{(D_{ij}G)\nabla
u_R,G\nabla u_R} \\
\qquad & -2a\eta_R^2\sum_{j=1}^NB_j\prodscal{(D_jG)\nabla
u_R,G\nabla
u_R}, \\
\\
g_{3,R}
& = -2a\abs{G\nabla u_R}^2\abs{G\nabla
\eta_R}^2-2a\eta_R^2\sum_{i,j=1}^NQ_{ij}\prodscal{(D_jG)\nabla
u_R,(D_iG)\nabla u_R}, \\
\\
g_{4,R}
& = - 2a\eta_R Tr[Q(D^2\eta_R)]\abs{G\nabla u_R}^2-8a\eta_R
\sum_{i,j=1}^NQ_{ij}(D_i\eta_R)\prodscal{(D_jG)\nabla u_R,G\nabla u_R}
\\
\qquad & - 8a\eta_R \sum_{i,j=1}^NQ_{ij}(D_i\eta_R)\prodscal{G\nabla
(D_ju_R),G\nabla u_R}, \\
\\
g_{5,R}
& = -4a\eta_R^2\sum_{i,j=1}^NQ_{ij}\prodscal{(D_jG)\nabla
(D_iu_R),G\nabla u_R}-4a\eta_R^2\sum_{i,j=1}^NQ_{ij}\prodscal{(D_jG)\nabla
u_R,G\nabla (D_iu_R)} \\
\qquad & + 4a\eta_R^2\prodscal{GTr[(\nabla
G)G(D^2u_R)],G\nabla u_R}, \\
\\
g_{6,R}
& = 2a\eta_R^2\abs{G\nabla u_R}^2.
\end{align*}

We are going to prove that there exists a positive constant $K$, independent of
$R$, such that $g_R(t,x)\leq Kv_R(t,x)$, for any $(t,x)\in [0,T]\times B(R)$.

The terms $g_{1,R}$ and $g_{2,R}$ are crucial in the estimate of $g_R$, since
they allow us to control all the other terms $g_{i,R}$, $i=3,4,5,6$.

Using \eqref{hyp0:DB} in Hypothesis \ref{hyp:maximum_principle}, we get
\[
\begin{split}
g_{1,R}(t,x)
& \leq-2t^{-1}\abs{G\nabla u_R}^2-2a\eta_R^2\nu(x)\sum_{i=1}^N\abs*{G\nabla
(D_iu_R)}^2
-2a\eta_R \prodscal{B,\nabla\eta_R}\abs{G\nabla u_R}^2, \\
g_{2,R} & =2a\eta_R^2\prodscal{M G\nabla u_R,G\nabla u_R}\leq
-2a\eta_R^2b(x)\abs*{G\nabla u_R}^2, \\
\\
g_{3,R}& \leq 0.
\end{split}
\]

$g_{4,R}$ is the awkward term. We have to pay particular attention to the way we
estimate its addends which we want to compare with $g_{1,R}$ and $g_{2,R}$.

As far as the first addend is concerned, taking advantage of
\eqref{eq:stima_Q_hess_eta} and of the well known Young's inequality
$ab\leq(\epsilon/2) a^2+(2\epsilon)^{-1}b^2$, which holds true for any
$a,b,\epsilon>0$, by \eqref{eq:stima_Q_hess_eta} we get
\[
\begin{split}
& \abs*{2a\eta_R\sum_{i,j=1}^NQ_{ij}(D_{ij}\eta_R)\abs*{G\nabla u_R}^2} \\
& \leq \frac a\epsilon\abs*{G\nabla u_R}^2
+a\epsilon\eta_R^2\abs*{\sum_{i,j=1}^NQ_{ij}(D_{ij}\eta_R)}^2\abs*{G\nabla
u_R}^2 \\
& \leq \frac a\epsilon\abs*{G\nabla u_R}^2
+2K_9a\epsilon\eta_R^2\left(\left(\frac{\langle Q(x)x,x\rangle}{1+\abs
x^4}\right)^2
+\left(\frac{\abs*{Tr[Q(x)]}}{1+\abs
x^2}\right)^2\right)\abs*{G\nabla u_R}^2.
\end{split}
\]
As far as the second term in the definition of $g_{4,R}$ is concerned, we have
\[
\begin{split}
& \abs*{8a\eta_R \sum_{i,j=1}^NQ_{ij}(D_i\eta_R)\prodscal{\left(D_jG\right)\nabla
u_R,G\nabla u_R}}\\
& = \abs*{8a\eta_R
\sum_{i,j=1}^NQ_{ij}(D_i\eta_R)\prodscal{\left(D_jG\right)G^{-1}G\nabla
u_R,G\nabla u_R}}\\
& \leq
8a\eta_R\sum_{i=1}^N\abs*{D_i\eta_R}\abs*{\sum_{j=1}^NQ_{ij}\left(D_jG\right)G^
{ -1 } }
\abs*{G\nabla u_R}^2 \\
& = 8a\eta_R\sum_{i=1}^N\abs*{D_i\eta_R} f_i\abs*{G\nabla u_R}^2.
\end{split}
\]

The last term in the definition of $g_{4,R}$ is the worst one because we need
to estimate the growths of both $\abs*{G\nabla u_R}$ and $\abs*{G\nabla
D_ju_R}$. We split it using the following inequality, which follows applying
twice the Young's inequality, and holds for any $A,B,C,\epsilon>0$:
\[
ABC\leq\frac 1 4 \left(2\epsilon C^2+\frac1\epsilon A^4+\frac 1
\epsilon B^4\right).
\]

We set
\begin{align*}
A & =
a^{3/8}\eta_R^{\delta/6}\abs*{\sum_{i=1}^NQ_{ij}D_i\eta_R}^{1-\delta/2}
\abs*
{ G \nabla u_R}^{1/2},\\
 B  &= a^{1/8}\abs*{G\nabla u_R}^{1/2}, \\
C & =
a^{1/2}\eta_R^{1-\delta/6}\abs*{\sum_{i=1}^NQ_{ij}D_i\eta_R}^{\delta/2}
\abs*
{G\nabla (D_ju_R)},
\end{align*}
where $\delta$ is defined in \eqref{hyp1:growth_Q}, and recall that
\[
l^j_R(x) =\frac{1}{1+R^2}\abs*{\sum_{i=1}^NQ_{ij}(x)x_i},
\]
for any $x\in\R^N$, $R\geq1$, $j=1,\ldots,N$. The particular split into $A, B$ and $C$ arises from the necessity of having coefficients of $\abs*{G\nabla (D_ju_R)}^2$, $j=1,\ldots,N$, and of $\abs*{G\nabla u_R}^2$ which we can estimate with $g_{1,R}$ and $g_{2,R}$. \eqref{eq:stima_Q_grad_eta} and straightforward computations yield
\[
\begin{split}
&
\abs*{8a\eta_R\sum_{i,j=1}^NQ_{ij}(D_i\eta_R)\prodscal{G\nabla
(D_ju_R),G\nabla u_R}}\notag \\
& \quad \leq
8\sum_{j=1}^N\left(a^{1/2}\eta_R^{1-\delta/6}|(Q\nabla\eta_R)_j|^{
\delta/2}\abs*{G\nabla (D_ju_R)}\right. \notag \\
& \qquad \left.\times
a^{3/8}\eta_R^{\delta/6}|(Q\nabla\eta_R)_j|^{1-\delta/2}
\abs*{G\nabla u_R}^{1/2}a^{1/8}\abs*{G\nabla u_R}^{1/2} \right) \notag \\
& \quad \leq
4aK_8^\delta\epsilon\eta_R^{2-\delta/3}\sum_{j=1}^N\left(l^j_R\right)^\delta\eta_R^{\delta/3}\abs*{G\nabla (D_ju_R)}^2\notag \\
& \qquad +\frac{2a^{3/2}K_8^{3-2\delta}}\epsilon
\eta_R\sum_{j=1}^N|(Q\nabla\eta_R)_j|
\left(l_R^j\right)^{3-2\delta}\abs*{G\nabla u_R}^2+\frac{2a^{1/2}N}{\epsilon}\abs*{G\nabla u_R}^2, \notag
\end{split}
\]
where we have estimated $|(Q\nabla\eta_R)_j|^{3-2\delta}$  by \eqref{eq:stima_Q_grad_eta} and we have kept the factor $|(Q\nabla\eta_R)_j|$ since we want as coefficient
\[
\frac{1}{\abs xR}\chi_{(1/2,3/4)}(\abs x/R)\frac{12(4\abs
x/R-2)^2}{\left(1-(4\abs x/R-2)^3\right)^2}.
\]

Hence we get
\[
\begin{split}
& \quad \leq 4aK_1K_8^\delta\epsilon\eta_R^2\nu\sum_{j=1}^N\abs*{G\nabla (D_ju_R)}^2
+\frac{2a^{3/2}K_8^{3-2\delta}}\epsilon\eta_R
\sum_{j=1}^N|(Q\nabla\eta_R)_j|\left(l^j_R\right)^{3-2\delta}\abs*{G\nabla u_R}^2 \notag\\
& \qquad +\frac{2Na^{1/2}}{\epsilon}\abs*{G\nabla u_R}^2 \notag\\
& \quad \leq 4aK_1K_8^\delta\epsilon\eta_R^2\nu\sum_{j=1}^N\abs*{G\nabla
(D_ju_R)}^2 \notag \\
& \qquad +\frac{2a^{3/2}K_8^{3-2\delta}}\epsilon\eta_R^2\frac{1}{\abs xR}\chi_{(1/2,3/4)}(\abs x/R)\frac{12(4\abs
x/R-2)^2}{\left(1-(4\abs x/R-2)^3\right)^2} \notag \\
& \qquad \times\sum_{j=1}^N\Big|\sum_{i=1}^NQ_{ij}x_i\Big|\left(l^j_R\right)^{3-2\delta}
\abs*{G\nabla u_R}^2 \notag\\
& \qquad +\frac{2Na^{1/2}}{\epsilon}\abs*{G\nabla u_R}^2,
\end{split}
\]

The last term that we need to estimate is $g_{5,R}$. Applying the Young inequality with $alpha$ and $\beta$ as in \eqref{hyp5:growthDB_2} we get
\begin{align*}
& \abs*{4a\eta_R^2\sum_{i,j=1}^NQ_{ij}\prodscal{\left(D_jG\right)G^{-1}G\nabla
(D_iu_R),G\nabla u_R}} \\
& \quad
\leq\frac{2a}\epsilon\eta_R^2\sum_{i=1}^N\abs*{\sum_{j=1}^NQ_{ij}
\left(D_jG\right)G^ {
-1}}^{\alpha}
\abs*{G\nabla u_R}^2 \\
& \qquad
+2a\epsilon\eta_R^2\sum_{i=1}^N\abs*{\sum_{j=1}^NQ_{ij}\left(D_jG\right)G^{-1}}
^ {
2-\alpha}
\abs*{G\nabla (D_iu_R)}^2, \\
& \quad = \frac{2a}\epsilon\eta_R^2\abs*{G\nabla
u_R}^2\sum_{i=1}^Nf_i(x)^\alpha +2a\epsilon\eta_R^2\abs*{G\nabla
(D_iu_R)}^2\sum_{i=1}^Nf_i(x)^{2-\alpha}, \\
\\
& \abs*{4a\eta_R^2\sum_{i,j=1}^NQ_{ij}\prodscal{\left(D_jG\right)G^{-1}G\nabla
u_R,G\nabla (D_iu_R)}} \\
& \quad \leq
\frac{2a}{\epsilon}\eta_R^2\abs*{G\nabla
u_R}^2\sum_{i=1}^Nf_i(x)^{\alpha}
+2a\epsilon\eta_R^2\abs*{G\nabla (D_iu_R)}^2\sum_{i=1}^Nf_i(x)^{2-\alpha},
\\
\\
& \abs*{4a\eta_R^2\prodscal{GTr[\left(\nabla G\right) G(D^2u_R)],G\nabla u_R}} \\
& \quad = 4a\eta_R^2
\Big|\sum_{i,j,l,m=1}^N G_{ij}D_jG_{lm}(G\nabla D_j u_R)_m(G\nabla u_R)_i\Big| \\
& \quad \leq 4a\eta_R^2\sum_{i,j,l,m=1}^N\left[\Big|G_{ij}D_jG_{lm}\Big|^{1-\beta/2}|(G\nabla D_ju_R)_m|\right]
\left[\Big|G_{ij}D_jG_{lm}\Big|^{\beta/2}|(G\nabla u_R)_i|\right] \\
& \quad \leq
\frac{2a}\epsilon\eta_R^2\sum_{i,j,l,m=1}^N\abs*{G_{ij}D_jG_{lm}}^
{\beta}
\abs*{G \nabla u_R}^2\\
&
\qquad
+2a\epsilon\eta_R^2\sum_{i,j,l,m=1}^N\abs*{G_{ij}D_jG_{lm}}^{
2-\beta}
\sum_{i=1}^N\abs*{G\nabla (D_iu_R)}^2 \\
& \quad = \frac{2a}\epsilon\eta_R^2h(x)^{\beta}\abs*{G \nabla u_R}^2
+2a\epsilon\eta_R^2h(x)^{2-\beta}\sum_{i=1}^N\abs*{G\nabla (D_iu_R)}^2.
\end{align*}

Hence, collecting the similar terms, and recalling that
\[
D_i\eta_R(x)=-\frac{x_i}{\abs xR}\chi_{(1/2,3/4)}(\abs x/R)\frac{12(4\abs
x/R-2)^2}{\left(1-(4\abs x/R-2)^3\right)^2}\eta_R(x),
\]
we deduce that
\[
g_R(t,x)\leq I_1(t,x)\abs {G(x)\nabla u_R(t,x)}^2+\sum_{i=1}^NI_{2,i}\abs{G(x)\nabla
(D_iu_R)(t,x)}^2
\]
for any $t\in(0,T]$, $x\in\R^N$, where
\begin{align}
I_1(t,x)
& = \left(-2+2a+\frac{2a^{1/2}Nt}{\epsilon}+\frac{at}\epsilon\right) \label{eq:stimaderu1}\\
& \quad +2at\eta_R(x)^2\chi_{\left[\frac12,\frac34\right)}(\abs
x/R)\frac{12(4\abs x/R-2)^2}{\abs xR\left(1-(4\abs x/R-2)^3\right)^2} \notag \\
& \quad \times\left(\sum_{i=1}^Nx_i B_i(x)
+4\sum_{i=1}^N\abs*{x_i}f_i(x)
+\frac{K_8^{3-2\delta}a^{1/2}}\epsilon\sum_{j=1}^N\abs*{(Qx)_j}\left(l^j_R(x)\right)
^{3-2\delta}\right) \label{eq:stimaderu2} \\
& \quad +2at\eta_R^2(x)\left(\frac2\epsilon\sum_{i=1}^Nf_i(x)^{\alpha}
+\frac 1\epsilon h(x)^{\beta}\right. \notag \\
& \quad \left.+K_9\epsilon\left[\left(\frac{\prodscal{Q(x)x,x}}{1+\abs
x^4}\right)^2+\left(\frac{\abs*{Tr[Q(x)]}}{1+\abs x^2}\right)^2\right] -b(x)\right), \label{eq:stimaderu3} \\
& &\notag \\
I_{2,i}(t,x)
& = 2at\eta_R^2\left(-\nu(x)+2K_1K_8^\delta\epsilon\nu(x)+2\epsilon
f_i(x)^{2-\alpha}+\epsilon h(x)^{2-\beta}\right) \notag \\[1mm]
& \leq 2at\eta_R^2\nu(x)\left(-1+2K_1K_8^\delta\epsilon+2K_6\epsilon+K_7\epsilon\right).
\label{eq1:stimadergradu1}
\end{align}

We now choose the parameters $a,\epsilon,n$ to guarantee that $I_1(t,x)$ is bounded
from above and $I_{2,i}(t,x)\leq0$ for any $t\in(0,T]$, $x\in\R^N$, $i=1,\ldots,N$. The choice for $I_{2,i}$ is immediate; indeed, it is easy to see that the right-hand side in \eqref{eq1:stimadergradu1} is non positive if and only if we choose $\epsilon>0$ such that
\[
-1+\left(2K_1K_8^\delta+2K_6+K_7\right)\epsilon\leq0.
\]
In such a way, the coefficients of $\abs{G\nabla (D _iu_R)}^2$ are negative,
for any $i$.

Now we consider $I_1$; it is bounded from above if and only if all the terms in the brackets in \eqref{eq:stimaderu1}, \eqref{eq:stimaderu2} and \eqref{eq:stimaderu3} are bounded. At first we find condition on $\epsilon$ such that \eqref{eq:stimaderu3} is bounded; by \eqref{hyp4:growthDB_1} we can easily deduce that \eqref{eq:stimaderu3} is smaller than
\[
\frac2\epsilon\sum_{i=1}^Nf_i(x)^{\alpha}
+\frac 1\epsilon h(x)^{\beta}+(\epsilon K_9/K_4-1)b(x)+K_5/K_4.
\]

This function has the same form of the left-hand side in \eqref{hyp5:growthDB_2}, hence for any $n\in\N$, which satisfies
\[
n\geq\frac{2}{\epsilon(1-K_9\epsilon/K_4)},
\]
\eqref{eq:stimaderu3} is bounded from above.

Fixed $\epsilon$, we get an estimate from above of \eqref{eq:stimaderu2} provided the following condition on $a$ is satisfied:
\[
a^{1/2}\leq\frac {K_2\epsilon}{K_8}.
\]

Finally, \eqref{eq:stimaderu1} is bounded.

With the previous choices of the parameters, $I_1$ turns out to be bounded from
above.

From \eqref{hyp3:growthB}, \eqref{hyp4:growthDB_1}, \eqref{hyp5:growthDB_2} we
obtain that $g(t,x)\leq cv_R(t,x)$, for any $(t,x)\in[0,T]\times B(R)$ and some $c>0$.
Hence, $v_R$ satisfies
\[
\left\{
\begin{array}{lll}
D_tv_R(t,x)-Av_R(t,x)\leq cv_R(t,x), & t\in(0,T], & x\in B(R), \\
\\
v_R(t,x)=0, & t\in[0,T], & x\in \partial B(R), \\
\\
v_R(0,x)=(\eta_R\varphi)^2(x), & & x\in \overline{B(R)}.
\end{array}
\right.
\]

The classical maximum principle shows that
\[
\abs{v_R(t,x)}\leq K \norma {\eta_R\varphi}^2_\infty\leq K\norma
\varphi^2_\infty, \quad t\in[0,T], \ x\in\overline{B(R)},
\]
for some positive constant $K$ independent of $R$. Taking the limit as
$R\rightarrow\infty$, we deduce that the function
$v(t,x)=u(t,x)^2+at\abs{G(x)\nabla u(t,x)}^2$ satisfies
\[
\abs{v(t,x)}\leq K\norma \varphi^2_\infty,
\]
so that the statement is proved.
\end{proof}

\begin{rmk}
By the semigroup property, it easily follows that, for any $\omega>0$, there
exists $C=C(\omega)>0$ such that
\begin{equation}
\norma*{G\nabla S(t)\varphi}_\infty\leq \frac{Ce^{\omega t}}{t^{1/2}}\norma
\varphi_\infty,
\label{eq:smgr_property}
\end{equation}
for any $t>0$ and any $\varphi\in C_b(\R^N)$.

Indeed, for any $\omega>0$, we can choose $\sigma=\sigma(\omega)$ such that
$e^{\omega t}t^{-1/2}>1$, for any $t>\sigma$. If $t>\sigma$ we can estimate
(using \eqref{eq:stimadergrad0} and recalling that $\{S(t)\}_{t\geq0}$ is a contraction
semigroup)
\[
\begin{split}
\norma*{G\nabla S(t)\varphi}_\infty
& =\norma*{G\nabla S(\sigma)S(t-\sigma)\varphi}_\infty \leq
\frac{C_\sigma}{\sigma^{1/2}}\norma*{S(t-\sigma) \varphi}_\infty \\
& \leq \frac{C_\sigma}{\sigma^{1/2}}\norma \varphi_\infty \leq
\frac{C_\sigma e^{\omega
t}}{\sigma^{1/2}t^{1/2}}\norma \varphi_\infty,
\end{split}
\]
and therefore \eqref{eq:smgr_property} holds with
$C=\max\{C_\sigma,\sigma^{-1/2}C_\sigma\}$.
\end{rmk}

Now we provide a class of operators $A$ which satisfy Hypothesis
\ref{hyp:maximum_principle}.
\begin{ex}
\label{ex:coeffA1}
Let $Q,B$ be defined as follows:
\[
Q_{ij}(x)=q_{ij}(1+\abs x^2)^m, \quad B_i(x)=-b_ix_i(1+\abs x^2)^p, \quad
\forall x\in\R^N,
\]
where $m,p>0$, $b_i>0$ for any $i=1,\ldots,N$, and $q=q_{ij}$ is a
positive-definite matrix such that
\[
\prodscal{q\xi,\xi}\geq\nu_0\norma\xi^2, \quad \forall \xi\in\R^N.
\]

If $N\geq2$, condition \eqref{hyp0:DB} is satisfied if and only if
\[
m\leq\frac b B,
\]
where $b=\min\{b_i\}$, $B=\max\{b_i\}$. With this restriction, in
\eqref{hyp1:growth_Q} it is possible to choose $\delta=3/2$, and conditions
\eqref{hyp3:growthB}, \eqref{hyp4:growthDB_1} and \eqref{hyp5:growthDB_2} are
fulfilled for any $p>m-1$.

If $N=1$, \eqref{hyp0:DB} is satisfied if $2p+1>m$, and, to satisfy also
\eqref{hyp1:growth_Q}, it is necessary to take $\delta\in[0,3/2]$ such that
$\delta\geq2m(\delta-1)$. One can easily check that there exists
$\delta\geq1$ which satisfies the previous inequality, and,
consequently, if $p>m$, then even \eqref{hyp3:growthB}, \eqref{hyp4:growthDB_1}
and \eqref{hyp5:growthDB_2} are fulfilled.
\end{ex}

\begin{prop}
Under the same assumptions of Theorem \ref{thm:epsilon_Bounded_C}, if $\varphi\in
C^1_b(\R^N)$, then the function
\[
(t,x)\mapsto G(x)\nabla S(t)\varphi(x)
\]
is bounded in $[0,T]\times \R^N$.
\label{prop:Bounded_C1}
\end{prop}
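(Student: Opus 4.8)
The plan is to revisit the proof of Theorem \ref{thm:epsilon_Bounded_C}, keeping the same localization but removing the time weight from the auxiliary function. Indeed, Theorem \ref{thm:epsilon_Bounded_C} already yields, through \eqref{eq:stimadergrad0}, that $(t,x)\mapsto G(x)\nabla u(t,x)$ is bounded on $[\epsilon,T]\times\R^N$ for every $\epsilon\in(0,T)$; the only point left open is the behaviour as $t\to0^+$, where the factor $t^{-1/2}$ degenerates. To capture the missing information near $t=0$ I would exploit the extra regularity $\varphi\in C^1_b(\R^N)$, which makes the gradient of the initial datum available and lets one dispense with the weight that annihilated the gradient term at $t=0$.

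Concretely, for $R\geq1$ let $u_R$ solve the Cauchy--Dirichlet problem \eqref{eq:eq_Cauchy_Dirichlet} and set
\[
v_R(t,x)=u_R(t,x)^2+a\,\eta_R(x)^2\abs{G(x)\nabla u_R(t,x)}^2,
\]
that is, the function used in Theorem \ref{thm:epsilon_Bounded_C} with the weight $t$ dropped. Computing $D_tv_R-Av_R$ reproduces the same structure as in that proof: the genuinely parabolic contributions coming from $\eta_R^2D_t\abs{G\nabla u_R}^2-A(\eta_R^2\abs{G\nabla u_R}^2)$ are identical, the only differences being the disappearance of the term $-2t^{-1}\abs{G\nabla u_R}^2$ and of $g_{6,R}$, which were merely lower-order multiples of $\abs{G\nabla u_R}^2$. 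Hence, with the same choice of the parameters $a,\epsilon,n$ and the same use of Hypothesis \ref{hyp:maximum_principle}—in particular the dissipativity \eqref{hyp0:DB} and the growth bounds \eqref{hyp1:growth_Q}--\eqref{hyp5:growthDB_2}—one still reaches a differential inequality
\[
D_tv_R(t,x)-Av_R(t,x)\leq c\,v_R(t,x),\qquad (t,x)\in(0,T]\times B(R),
\]
with $c>0$ independent of $R$. Since $v_R\geq0$ and $v_R=0$ on $\partial B(R)$, the comparison principle applied to the subsolution $e^{-ct}v_R$ gives $v_R(t,x)\leq e^{cT}\norma{v_R(0,\cdot)}_\infty$ on $[0,T]\times\overline{B(R)}$.

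The decisive step, and the main obstacle, is to bound the initial datum
\[
v_R(0,x)=(\eta_R\varphi)^2(x)+a\,\eta_R(x)^2\abs{G(x)\nabla(\eta_R\varphi)(x)}^2
\]
uniformly in $R$. Writing $\nabla(\eta_R\varphi)=\varphi\,\nabla\eta_R+\eta_R\nabla\varphi$, one is led to control separately $\eta_R^2\varphi^2\abs{G\nabla\eta_R}^2$ and $\eta_R^4\abs{G\nabla\varphi}^2$. The first quantity is handled by \eqref{eq:stima_grad_eta}--\eqref{eq:stima_Q_grad_eta}, which estimate $G\nabla\eta_R$ in terms of the quantities $l_R^j$ entering Hypothesis \ref{hyp:maximum_principle}; the second is exactly where $\varphi\in C^1_b(\R^N)$ must be used, since it is the regularity of $\varphi$ that must furnish a bound for $\abs{G\nabla\varphi}$ on the support of $\eta_R$. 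This is the genuinely delicate point: unlike in Theorem \ref{thm:epsilon_Bounded_C}, where the weight $t$ suppressed the gradient of the datum at $t=0$, here that gradient survives, and the heart of the argument is to verify that both contributions remain bounded independently of $R$.

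Once $\norma{v_R(0,\cdot)}_\infty\leq C$ with $C$ independent of $R$ is secured, the conclusion is immediate. Fix $\rho\geq1$ and take $R\geq2\rho$, so that $\eta_R\equiv1$ on $B(\rho)$; then the definition of $v_R$ gives $a\abs{G(x)\nabla u_R(t,x)}^2\leq v_R(t,x)\leq e^{cT}C$ for $(t,x)\in[0,T]\times B(\rho)$. Letting $R\to\infty$, $u_R\to u$ together with its first spatial derivatives on compact sets, whence $a\abs{G(x)\nabla u(t,x)}^2\leq e^{cT}C$ on $[0,T]\times B(\rho)$. Since $\rho$ is arbitrary and $C$ does not depend on it, the map $(t,x)\mapsto G(x)\nabla u(t,x)$ is bounded on $[0,T]\times\R^N$, as claimed.
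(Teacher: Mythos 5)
Your strategy is exactly the paper's: drop the weight $t$ from the auxiliary function, rerun the computation of Theorem \ref{thm:epsilon_Bounded_C} to obtain $D_tv_R-Av_R\le c\,v_R$ with $c$ independent of $R$, and conclude by the maximum principle. The problem is the step that you yourself single out as ``decisive'' and then do not carry out: the uniform-in-$R$ bound on
\[
v_R(0,\cdot)=(\eta_R\varphi)^2+a\,\eta_R^2\abs{G\nabla(\eta_R\varphi)}^2 .
\]
This is not a technicality that can simply be ``secured'': the term $\eta_R^4\abs{G\nabla\varphi}^2$ is bounded uniformly in $R$ only if $G\nabla\varphi$ is itself bounded on $\R^N$, and $\varphi\in C^1_b(\R^N)$ does not give this when $G$ is unbounded (take $\varphi(x)=\sin x_1$ and $Q(x)=(1+\abs x^2)^m q$ as in Example \ref{ex:coeffA1}: then $\abs{G\nabla\varphi}$ grows like $(1+\abs x^2)^{m/2}$). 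Moreover, boundedness of $G\nabla\varphi$ is \emph{necessary} for the conclusion, since $G\nabla S(0)\varphi=G\nabla\varphi$, so no alternative choice of auxiliary function can avoid this issue. An argument along your lines therefore requires either the additional hypothesis $G\nabla\varphi\in C_b(\R^N)$ (in which case your decomposition $\nabla(\eta_R\varphi)=\varphi\nabla\eta_R+\eta_R\nabla\varphi$ does close the proof, with a constant depending on $\norma{G\nabla\varphi}_\infty$, once one also checks via \eqref{eq:stima_Q_grad_eta} and \eqref{hyp1:growth_Q} that $\varphi^2\eta_R^2\abs{G\nabla\eta_R}^2$ stays bounded), or a restriction on the growth of $G$.

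For what it is worth, the paper's own proof is no better on this point: it asserts that $v_R(0,x)=(\eta_R\varphi)^2(x)$, silently dropping the gradient term from the initial datum, which is legitimate only in Theorem \ref{thm:epsilon_Bounded_C}, where the factor $t$ annihilates that term at $t=0$. So your instinct about where the difficulty lies is correct --- you located a real gap that the paper glosses over --- but your write-up acknowledges the obstacle rather than overcoming it, and under the hypotheses as stated (unbounded $G$, only $\varphi\in C^1_b$) it cannot be overcome.
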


\begin{proof}
The proof is quite similar to the one of Theorem \ref{thm:epsilon_Bounded_C},
hence we just sketch it. We fix $R\geq1$, and denote by $u_R$ the solution to the
Dirichlet Cauchy problem \eqref{eq:eq_Cauchy_Dirichlet}. Further we set
\[
v_R(t,x)=u_R(t,x)^2+a\eta_R^2\abs{G(x)\nabla u_R(t,x)}^2, \quad
t\in(0,T], \quad
x\in \overline{B(R)}.
\]

Function $v_R$ is continuous in its domain and it solves the Cauchy problem
\[
\left\{
\begin{array}{lll}
D_tv_R(t,x)-Av_R(t,x)=\tilde g_R(t,x), & t\in[0,T], & x\in B(R), \\
\\
v_R(t,x)=0, & t\in[0,T], & x\in\partial B(R), \\
\\
v_R(0,x)= (\eta_R\varphi)^2(x), & & x\in \overline{B(R)},
\end{array}
\right.
\]
where $\tilde g_R(t,x)=\tilde g_{1,R}(t,x)+\displaystyle\sum_{i=2}^5
g_{i,R}(t,x)$,
\begin{align*}
\tilde g_{1,R}
& = -2\abs{G\nabla
u_R}^2-2a\eta_R^2\sum_{i,j=1}^NQ_{ij}\prodscal{G\nabla
(D_iu_R),G\nabla (D_ju_R)} \\
& \quad -2a\eta_R \sum_{i=1}^NB_iD_i\eta_R\abs{G\nabla u_R}^2,
\end{align*}
and $g_{i,R}$, $i=2,3,4,5$, have been defined in Theorem
\ref{thm:epsilon_Bounded_C}. Repeating the computations of Theorem
\ref{thm:epsilon_Bounded_C}, we see that
\[
\tilde g_R\leq I_1\abs {G\nabla u_R}^2+\sum_{i=1}^NI_{2,i}\abs{G\nabla
(D_iu_R)}^2,
\]
where
\[
\begin{split}
I_1
& = \left(-2+a\eta_R+\frac{2a^{1/2}N}{\epsilon}+\frac{a}\epsilon\right) \\
& \quad +2a\eta_R(x)^2\chi_{\left(\frac12,\frac34\right)}(\abs
x/R)\frac{12(4\abs x/R-2)^2}{\abs xR\left(1-(4\abs x/R-2)^3\right)^2}
\\
& \quad \times\left(\sum_{i=1}^Nx_iB_i(x)
+4\sum_{i=1}^N\abs*{x_i}f_i(x)
+\frac{K_8^{3-2\delta}a^{1/2}}\epsilon\sum_{j=1}^N\abs*{(Qx)_j}\left(l^j_R(x)\right)^{3-2\delta}\right) \\
& +2a\eta_R^2\left(\frac2\epsilon\sum_{i=1}^Nf_i(x)^{\alpha}
+\frac 1\epsilon
h(x)^{\beta}\right. \\
& \quad+\left.K_9\epsilon\left[\left(\frac{\abs*{\prodscal{Q(x)x,x}}}{1+\abs
x^4}\right)^2
+\left(\frac{\abs*{Tr[Q(x)]}}{1+\abs x^2}\right)^2\right] -b(x)\right), \\
\\
I_{2,i}
& = 2a\eta_R^2\left(-\nu(x)+2K_1K_8\epsilon\nu(x)+2\epsilon
f_i(x)^{2-\alpha}+\epsilon h(x)^{2-\beta}\right) \\[1mm]
& \leq2a\eta_R^2\nu(x)\left(-1+2K_1K_8\epsilon+2K_6\epsilon+K_7\epsilon\right).
\end{split}
\label{eq1:stimadergradu1}
\]

A suitable choice of the parameters $a,\epsilon,n$ guarantees that
$I_2\leq0$ and that there exists a positive constant $C$ such that $I_1\leq C$.
Hence, $v_R$ satisfies
\[
\left\{
\begin{array}{lll}
D_tv_R(t,x)-Av_R(t,x)\leq Cv_R(t,x), & t\in[0,T], & x\in B(R), \\
\\
v_R(t,x)=0, & t\in[0,T], & x\in\partial B(R), \\
\\
v_R(0,x)= (\eta_R\varphi)^2(x), & & x\in \overline{B(R)},
\end{array}
\right.
\]
and the classical maximum principle shows that
\[
\abs*{v_R(t,x)}\leq K\norma\varphi_\infty, \quad (t,x)\in[0,T]\times\overline{B(R)},
\]
for some positive constant $K=K(T)$ independent of $R$. Taking the limit as
$R\rightarrow+\infty$, the assertion follows.
\end{proof}

\subsection{Existence and uniqueness of a mild solution to the problem
\eqref{eq:eq_Cauchy_NL}}
In this part we will prove that the operator $\Gamma$ defined for any
$u\in\Kk_T$ by
\[
(\Gamma u)(t,x):=S(T-t)\varphi(x)-\int_t^TS(r-t)F(r,u)(x)dr, \quad \forall t\in[0,T], \quad x\in\R^N,
\]
admits a unique fixed point. We call a {\it mild solution} of problem \eqref{eq:eq_Cauchy_NL} any fixed point $v\in\Kk_T$ of the operator $\Gamma$.

\begin{rmk}
If $\psi$ satisfies Hypothesis \ref{hyp:g_lipschitz}, then (see
\eqref{eq:def_F})
\begin{equation}
\begin{split}
(i) \ & \ \norma{F(s,u)-F(s,v)}_\infty \leq L_\psi (T-s)^{-1/2}[u-v]_{\Kk_T},
\quad
s\in[0,T),x\in\R^N, \\
(ii) \ & \ \norma{F(s,u)}_\infty \leq
L_\psi\left(1+(T-s)^{-1/2}[u]_{\Kk_T}\right),
\end{split}
\label{eq:F_lipschitz}
\end{equation}
for any $u,v\in \Kk_T$. Moreover, if $u\in \Kk_T$,
$F(\cdot,u)(\cdot):[0,T)\times \R^N\longrightarrow \R^N$ belongs to
$C([0,T)\times\R^N)$.
\label{rmk:g_lipschitz}
\end{rmk}

The following proposition shows some continuity and boundless properties of the functions which belong to $\Kk_a$, for some $a>0$.

\begin{prop}
\label{prop:regularity_mild_solution}
If $u \in \Kk_a$, for some $a>0$, $F$ satisfies \eqref{eq:F_lipschitz} and
\[
\sup_{t\in(T-a,T)}(T-t)^{1/2}\norma*{G\nabla u(t,\cdot)}_\infty<\infty,
\]
then the functions
\[
(t,x)\mapsto \tilde F(t,x):=\int_t^TS(r-t)F(r,u)(x)dr
\]
and
\[
(t,x)\mapsto G(x)\nabla\tilde F(t,x)
\]
are continuous and bounded in $[T-a,T]\times\R^N$.
\end{prop}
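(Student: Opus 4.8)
The plan is to dispatch boundedness first, which is essentially the integral computation underlying the fixed-point scheme, and then to spend the real effort on continuity.

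For boundedness I would estimate directly from $\tilde F(t,x)=\int_t^T S(r-t)F(r,u)(x)\,dr$. Since $\{S(t)\}_{t\ge0}$ is a contraction on $C_b(\R^N)$, $\norma{S(r-t)F(r,u)}_\infty\le\norma{F(r,u)}_\infty$, and the second inequality in \eqref{eq:F_lipschitz} gives $\norma{F(r,u)}_\infty\le L_\psi\bigl(1+(T-r)^{-1/2}[u]_{\Kk_a}\bigr)$; integrating and using $\int_t^T(T-r)^{-1/2}\,dr=2(T-t)^{1/2}\le 2a^{1/2}$ bounds $\tilde F$ on $[T-a,T]\times\R^N$. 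For the gradient I would first observe that, by Hypothesis \ref{hyp:maximum_principle}(ii), $G$ is invertible with $\norma{G^{-1}}_\infty\le\nu_0^{-1/2}$, so the weighted estimate \eqref{eq:smgr_property} also controls $\nabla S(\sigma)\phi$; this justifies differentiating under the integral sign and writing $G\nabla\tilde F(t,x)=\int_t^T G\nabla S(r-t)F(r,u)(x)\,dr$. Applying \eqref{eq:smgr_property}, bounding $e^{\omega(r-t)}\le e^{\omega a}$, and invoking the second inequality in \eqref{eq:F_lipschitz} reduces the estimate to the two finite integrals $\int_t^T(r-t)^{-1/2}\,dr=2(T-t)^{1/2}$ and the Beta-type integral $\int_t^T(r-t)^{-1/2}(T-r)^{-1/2}\,dr=\pi$, both independent of $t$; hence $G\nabla\tilde F$ is bounded.

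For continuity the main obstacle is that $\{S(t)\}$ is neither strongly continuous nor analytic and that, because $G$ is unbounded, $r\mapsto F(r,u)$ is \emph{not} continuous with values in $\bigl(C_b(\R^N),\norma{\cdot}_\infty\bigr)$, so the time dependence cannot be pulled through the sup norm. I would instead use the transition-kernel representation $S(\sigma)\phi(x)=\int_{\R^N}\phi(y)\,p_\sigma(x,dy)$, with $p_\sigma(x,\cdot)$ a probability measure, together with the joint continuity of $F(\cdot,u)(\cdot)$ on $[0,T)\times\R^N$ supplied by Remark \ref{rmk:g_lipschitz}. Given $\epsilon>0$, I would split the integral so as to isolate the region where $r$ is near $T$ (and, for the gradient, where $r$ is near $t$): these tails are controlled uniformly in $(t,x)$ by the integrable singularities $(T-r)^{-1/2}$ and $(r-t)^{-1/2}$, exactly as in the boundedness step, and so contribute at most $\epsilon$. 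On the remaining bulk, where $F(\cdot,u)$ is bounded and jointly continuous and the semigroup time stays bounded away from $0$, I would pass to the limit $(t',x')\to(t,x)$ by dominated convergence: for fixed $\sigma$, $F(t'+\sigma,u)(y)\to F(t+\sigma,u)(y)$ pointwise in $y$, integrating against the fixed probability measures $p_\sigma(x',\cdot)$ and using continuity of $x'\mapsto S(\sigma)\phi(x')$ yields convergence of the integrand, and a second dominated-convergence argument in $\sigma$, with the integrable majorant from \eqref{eq:F_lipschitz}, closes the estimate.

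The delicate point inside the bulk is the simultaneous motion of $t'$ and $x'$ in the term $\int\bigl[F(t'+\sigma,u)(y)-F(t+\sigma,u)(y)\bigr]\,p_\sigma(x',dy)$, where pointwise convergence of $F$ against a moving family of measures does not suffice by itself. I would resolve this by combining the local uniform convergence of $F(t'+\sigma,u)\to F(t+\sigma,u)$ on large balls $B(M)$ with the uniform tightness of $\{p_\sigma(x',\cdot)\}$ for $x'$ in a compact neighbourhood of $x$, the latter being a consequence of the Lyapunov/non-explosion condition that guarantees existence of the semigroup; splitting the inner integral over $B(M)$ and its complement then makes both pieces small, uniformly over the bulk. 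The same scheme transfers verbatim to $G\nabla\tilde F$, with $S(\sigma)$ replaced by $G\nabla S(\sigma)$ and the contraction property replaced by \eqref{eq:smgr_property}, the only additional care being the extra integrable singularity at $r=t$, which is already removed in the tail step. Finally, continuity at $t=T$ is immediate, since both $\tilde F(T,\cdot)$ and $G\nabla\tilde F(T,\cdot)$ vanish while the uniform tail bounds force $\tilde F(t,x),\,G\nabla\tilde F(t,x)\to0$ as $t\to T$.
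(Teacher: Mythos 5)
Your boundedness step is correct, and your continuity argument for $\tilde F$ itself is workable (the change of variables $\sigma=r-t$, the kernel representation, and tightness of $\{p_\sigma(x',\cdot)\}$ via the Lyapunov condition can be made rigorous, since under the paper's standing assumptions the semigroup is conservative). But note that your stated reason for this detour is not compelling: the paper never needs $r\mapsto F(r,u)$ to be continuous in sup norm. It fixes $r$, observes that $F(r,u)\in C_b(\R^N)$, so that $(s,x)\mapsto S(s)F(r,u)(x)$ is jointly continuous on $[0,\infty)\times\R^N$ (it is the bounded classical solution of the homogeneous problem), and then applies dominated convergence in $r$ with the integrable majorant $L_\psi\bigl(1+(T-r)^{-1/2}[u]_{\Kk_a}\bigr)$, suitably multiplied by $(r-t_0)^{-1/2}$-type factors for the gradient. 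No kernels or tightness are needed.

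The genuine gaps are in the gradient part. First, your claim that the scheme ``transfers verbatim'' with $S(\sigma)$ replaced by $G\nabla S(\sigma)$ fails: $G\nabla S(\sigma)$ is not given by integration against probability measures, so the moving-measure/tightness argument has no direct analogue. What is actually needed is that $(s,x)\mapsto \nabla S(s)\phi(x)$ is continuous on $(0,\infty)\times\R^N$ for fixed $\phi\in C_b(\R^N)$ (interior parabolic Schauder regularity of the classical solution), and, in your change-of-variables formulation, also the stability statement that $\phi_n\to\phi$ boundedly and locally uniformly implies $\nabla S(\sigma)\phi_n\to\nabla S(\sigma)\phi$ locally uniformly for $\sigma>0$ --- again a consequence of interior Schauder estimates, not of tightness. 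This is precisely the ingredient the paper's proof relies on: it shows the integrand tends to $0$ pointwise in $t$, locally uniformly in $x$, and closes with dominated convergence against the majorant $h(r)=C_TL_\psi\bigl(1+(T-r)^{-1/2}[u]_{\Kk_T}\bigr)\bigl((r-t_0)^{-1/2}+(r-t_0-\epsilon^2)^{-1/2}\bigr)$ after cutting out the region $r\in(t_0,t_0+\epsilon^2)$. Second, your endpoint claim is false for the gradient: the ``uniform tail bound'' for $G\nabla\tilde F$ is
\begin{equation*}
C\int_t^T(r-t)^{-1/2}\Bigl(1+(T-r)^{-1/2}[u]_{\Kk_a}\Bigr)dr
=2C(T-t)^{1/2}+\pi C\,[u]_{\Kk_a},
\end{equation*}
and the Beta integral $\int_t^T(r-t)^{-1/2}(T-r)^{-1/2}dr=\pi$ is scale invariant, hence does \emph{not} vanish as $t\to T$; nothing in these estimates forces $G\nabla\tilde F(t,x)\to0$. (This is consistent with the paper: its proof establishes continuity only at points $t_0<T$, in line with $\Kk_a$ requiring $C^{0,1}$ regularity only on $[T-a,T)$.) For $\tilde F$ itself your endpoint argument is fine, since there the bound $\int_t^T\bigl(1+(T-r)^{-1/2}[u]_{\Kk_a}\bigr)dr$ does tend to $0$.
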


\begin{proof}[Proof]
For any $t\in[T-a,T]$, the functions
\[
x\mapsto \tilde F(t,x):=\int_t^TS(r-t)F(r,u)(x)dr, \quad x\mapsto G(x)\nabla\tilde F(t,x)
\]
are continuous in $\R^N$. Hence it is enough to show that these functions are continuous with respect to $t$, locally uniformly with respect to $x$.

Let $(t_0,x_0) \in(T-a,T)\times \R^N$, $B=B(x_0,1)\in\R^N$, and fix
$t\in(t_0-\delta,t_0+\delta)$, where $0<\delta<\min\{T-t_0,a+t_0-T\}$.
We will only prove the continuity from the right with respect to time, uniformly with respect to $x$, since the continuity from the left can be proved arguing in
the same way. Hence we consider $t\in(t_0,t_0+\delta)$. We have
\[
\begin{split}
\abs*{\tilde F(t_0,x)-\tilde F(t,x)}
& \leq \int_t^T\abs*{S(r-t_0)F(r,u)(x)-S(r-t)F(r,u)(x)}dr
\\
& \quad +\int_{t_0}^t\abs*{S(r-t_0)F(r,u)(x)}dr \\
& =
\int_{t_0}^T\abs*{S(r-t_0)F(r,u)(x)-S(r-t)F(r,u)(x)}\chi_{(t,
T)}(r)dr \\
& \quad +\int_{t_0}^t\abs*{S(r-t_0)F(r,u)(x)}dr \\
& =:I_1(t,x)+I_2(t,x).
\end{split}
\]

Since $\norma*{S(r-t_0)F(r,u)}_\infty\leq C$, for any $r\in(t_0,t_0+\delta)$,
$I_2$ tends to $0$, as $t$ tends to $t_0$, uniformly with respect to $x\in B$.

Now we consider $I_1$. Since $u\in \Kk_a$, we can estimate the function under the integral sign
as follows:
\[
\begin{split}
\norma*{S(r-t_0)F(r,u)-S(r-t)F(r,u)}_\infty\chi_{(t,T)}(r)
& \leq 2M_0\norma*{F(r,u)}_\infty \\
& \leq 2M_0L_\psi\left(1+(T-r)^{-1/2}[u]_{\Kk_a}\right),
\end{split}
\]
for any $r\in(t_0,T)$, and the last function is integrable in $(t_0,T)$.

Finally, for any $r\in(0,T)$, $F(r,u)\in C_b(\R^N)$ by \eqref{eq:F_lipschitz}. Hence
$S(\cdot)F(r,u)(\cdot)$ belongs to
$C([0,\infty)\times\R^N)$, and
\[
\lim_{t\downarrow t_0}\abs{S(r-t_0)F(r,u)(x)-S(r-t)F(r,u)(x)}=0,
\]
uniformly with respect to $x\in B$, for any $r\in(0,T)$.


By dominated convergence we can conclude that $I_1$ tends to $0$ as
$t$ approaches $t_0$, uniformly with respect to $x\in B$.

Proving the continuity of the gradient is a bit more complicated. Let
$t_0,x_0,t,B,\delta$ be as
above; we have
\[
\begin{split}
&\abs*{G(x)\nabla\tilde F(t_0,x)-G(x)\nabla\tilde
F(t,x)} \\
& \quad \leq \int_t^T\abs*{G(x)\nabla
S(r-t_0)F(r,u)(x)-G(x)\nabla S(r-t)F(r,u)(x)}dr \\
& \qquad +\int_{t_0}^t\abs*{G(x)\nabla S(r-t_0)F(r,u)(x)}dr \\
& \qquad =:\tilde I_1(t,x)+\tilde I_2(t,x).
\end{split}
\]

By Theorem \ref{thm:epsilon_Bounded_C}, there exists a positive constant C such
that
\[
\norma*{G\nabla S(r-t_0)F(r,u)}_{\infty}\leq
(r-t_0)^{-1/2} C,
\]
for any $r\in(t_0,t_0+\delta)$. Hence $\tilde I_2$ tends to zero as $t$ tends
to $t_0$, uniformly with respect to $x\in B$.

The term $\tilde I_1$ should be analyzed differently. Fix $\epsilon>0$ and
$t\in(t_0,t_0+\delta)$ such that $t-t_0<\epsilon^2$. Now we
split the integral:
\[
\begin{split}
&\tilde I_1(t,x) \\
& \quad =\int_{t_0+\epsilon^2}^{T}\abs*{G(x)\nabla
S(r-t_0)F(r,u)(x)-G(x)\nabla S(r-t)F(r,u)(x)}dr \\
& \quad +\int_{t_0}^{t_0+\epsilon^2}\abs*{G(x)\nabla
S(r-t_0)F(r,u)(x)-G(x)\nabla S(r-t)F(r,u)(x)}\chi_{(t,T)}(r)dr \\
& \quad =:J_1(t,x)+J_2(t,x).
\end{split}
\]

Easy computations show that there exists a positive constant $C>0$,
independent of $t,x$, such that
\[
J_2(t,x)\leq C \epsilon, \quad \forall t\in(t_0,t_0+\epsilon^2),\quad \forall
x\in
B.
\]

For $J_1$, it is enough to observe that the function under the integral
sign converges to $0$ pointwise with respect to $t$, locally uniformly with
respect to $x$, and that the function $h$, defined by
\[
h(r)=C_TL_\psi\left(1+(T-r)^{-1/2}[u]_{\Kk_T}\right)\left((r-t_0)^{-1/2}+(r-t_0-\epsilon^2)^{-1/2}\right)
\]
is independent on $t$ and $x$ and bounds $J_1$ from above. Dominated convergence
allows us to conclude that $J_1(t,x)$ vanishes to $0$ as $t$ tends to $t_0$,
locally uniformly with respect to $x$. Hence, there exists
$c_\epsilon\leq\epsilon^2$ such that, if $t_0-t<c_\epsilon$ and $x\in B$, then
$J_1(t,x)\leq \epsilon$. It means that there exists a suitable $C>0$ such that
$\tilde I_1(t,x)\leq C\epsilon$ for any $t>t_0-c_\epsilon$ and $x\in B$.
\end{proof}

We now look for a solution to problem \eqref{eq:eq_Cauchy_NL} in $\Kk_T$. At
first, we show that, if $u$ is a mild
solution of \eqref{eq:eq_Cauchy_NL} in $\Kk_a$, for some $a\in(0,T)$, then it is the unique mild
solution in such a space.

\begin{prop}[Uniqueness]
If problem \eqref{eq:eq_Cauchy_NL} admits a mild solution in $\Kk_a$, then it is unique.
\label{prop:uniqueness_1}
\end{prop}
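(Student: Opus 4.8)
The plan is to argue by a Volterra-type comparison using the two solutions directly. Suppose $u,v\in\Kk_a$ are both mild solutions of \eqref{eq:eq_Cauchy_NL} and set $w=u-v$. Since each satisfies the fixed-point identity defining $\Gamma$, the terminal data $S(T-s)\varphi$ cancel upon subtraction and
\[
w(s,\cdot)=-\int_s^T S(r-s)\bigl(F(r,u)-F(r,v)\bigr)\,dr,\qquad s\in[T-a,T].
\]
First I would introduce the scalar quantity $\psi_1(s):=\norma{G\nabla w(s,\cdot)}_\infty$. Applying $G\nabla$ under the integral sign (legitimate by the continuity and boundedness, together with the integral representation of $G\nabla\tilde F$, established in Proposition \ref{prop:regularity_mild_solution}), then using the weighted gradient estimate of Theorem \ref{thm:epsilon_Bounded_C} with the bounded datum $F(r,u)-F(r,v)\in C_b(\R^N)$, and finally the Lipschitz bound $\norma{F(r,u)-F(r,v)}_\infty\le L_\psi\norma{G\nabla w(r,\cdot)}_\infty$ furnished by Hypothesis \ref{hyp:g_lipschitz}, I obtain the singular integral inequality
\[
\psi_1(s)\le C_TL_\psi\int_s^T (r-s)^{-1/2}\psi_1(r)\,dr,\qquad s\in[T-a,T).
\]
Because $u,v\in\Kk_a$, the a priori control $\psi_1(s)\le (T-s)^{-1/2}[w]_{\Kk_a}$ holds, so $\psi_1$ is finite and integrable on $[T-a,T]$; this integrability is exactly what makes the inequality exploitable.

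The central difficulty, and the step I expect to be the main obstacle, is that the kernel $(r-s)^{-1/2}$ is weakly singular, so Gronwall's lemma cannot be applied directly. The key idea is to iterate the inequality once: substituting it into itself and exchanging the order of integration by Tonelli (justified by nonnegativity and the integrability just noted), the inner integral reduces to the Beta-type integral
\[
\int_s^{\rho}(r-s)^{-1/2}(\rho-r)^{-1/2}\,dr=\pi,
\]
which is independent of $s$ and $\rho$. This absorbs the singularity and produces the regular inequality $\psi_1(s)\le (C_TL_\psi)^2\pi\int_s^T\psi_1(\rho)\,d\rho$, now with a bounded kernel.

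Finally I would invoke the backward Gronwall lemma. Setting $g(s)=\int_s^T\psi_1(\rho)\,d\rho$, one has $g(T)=0$, $g\ge0$, and from $-g'(s)=\psi_1(s)\le (C_TL_\psi)^2\pi\,g(s)$ it follows that $s\mapsto e^{(C_TL_\psi)^2\pi\,s}g(s)$ is non-decreasing; vanishing at $s=T$ and being nonnegative, it is identically zero, whence $g\equiv0$ and thus $\psi_1\equiv0$ on $[T-a,T]$. Consequently $F(r,u)=F(r,v)$ for every $r$, and inserting this into the representation of $w$ gives $\norma{w(s,\cdot)}_\infty=0$ for all $s\in[T-a,T]$, that is $u=v$. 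Apart from the singular-kernel iteration, every ingredient is a direct combination of Theorem \ref{thm:epsilon_Bounded_C}, the Lipschitz estimate of Hypothesis \ref{hyp:g_lipschitz}, and the regularity of Proposition \ref{prop:regularity_mild_solution}.
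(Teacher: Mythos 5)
Your proposal is correct and follows essentially the same route as the paper's proof: subtract the two mild solutions so the terminal data cancel, derive the singular integral inequality for $\norma{G\nabla(u-v)(t,\cdot)}_\infty$ from Theorem \ref{thm:epsilon_Bounded_C} and Hypothesis \ref{hyp:g_lipschitz}, iterate once so that the Beta integral $\int_t^s(r-t)^{-1/2}(s-r)^{-1/2}dr=\pi$ removes the singularity, apply the (backward) Gronwall lemma, and then conclude $\norma{u-v}_\infty=0$ from the sup-norm estimate. Your explicit remarks on the integrability of $(T-s)^{-1/2}$ and on justifying the interchange of $G\nabla$ with the integral via Proposition \ref{prop:regularity_mild_solution} are refinements of details the paper leaves implicit, not a different argument.
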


\begin{proof}[Proof]
Let $u,v\in \Kk_a$ be two mild solutions of \eqref{eq:eq_Cauchy_NL}. Then,
taking \eqref{eq:g_lipschitz} and \eqref{eq:stimadergrad0} into account, for any
$t\in[T-a,T]$ we get

\[
\begin{split}
\norma{G\nabla(u-v)(t,\cdot)}_\infty
&  \leq \norma*{\int_t^TG\nabla S(r-t)\left(F(r,u)-F(r,v)\right)dr}_\infty \\
& \leq C_TL_\psi\int_t^T(r-t)^{-1/2}\norma{G\nabla(u-v)(r,\cdot)}_\infty dr \\
& \leq C_T^2L_\psi^2\int_t^T
(r-t)^{-1/2}dr\left(\int_r^T(s-r)^{-1/2}\norma{G\nabla(u-v)(s,\cdot)}_\infty
ds\right) \\
& = C_T^2L_\psi^2\int_t^T\norma{G\nabla(u-v)(s,\cdot)}_\infty
ds\left(\int_t^s(r-t)^{-1/2}(s-r)^{-1/2}dr\right) \\
& = C_T^2L_\psi^2\pi\int_t^T\norma{G\nabla(u-v)(s,\cdot)}_\infty
ds \\
\end{split}
\]

Hence, by the Gronwall Lemma we deduce that $\norma*{G\nabla
(u-v)(t,\cdot)}_\infty=0$, for any $t\in[0,T)$. To conclude, it is enough to
observe that
\[
\begin{split}
\norma{u-v}_\infty
& \leq \norma*{\int_t^TS(r-t)\left(F(r,u)-F(r,v)\right)dr}_\infty \\
& \leq L_\psi\int_t^T\norma*{G\nabla (u(r,\cdot)-v(r,\cdot))}_\infty dr \\
& = 0.
\end{split}
\]
\end{proof}

Now, we prove the existence of a mild solution of problem
\eqref{eq:eq_Cauchy_NL}.

\begin{thm}
There exist $\delta<T$ such that the operator $\Gamma$, defined by
\begin{equation}
\Gamma(v)(t,x)=S(T-t)\varphi(x)-\int_t^TS(r-t)F(r,v)(x)dr, \quad
(t,x)\in (T-\delta,T]\times\R^N,
\label{eq:Gamma_1}
\end{equation}
for any $v\in \Kk_{\delta}$, admits a unique fixed point.
\label{thm:fixed_point_NL}
\end{thm}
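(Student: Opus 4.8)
The plan is to apply the Banach fixed point theorem in the space $\Kk_\delta$, endowed with the norm $\norma{\cdot}_{\Kk_\delta}$ of \eqref{eq:normakalpha}, which one checks to be a Banach space in the standard way (uniform limits of functions with uniformly converging weighted gradients stay in $C_b\cap C^{0,1}$, using that $G=\sqrt Q$ is invertible with continuous inverse). Concretely, I would fix $\delta<T$ and show that $\Gamma$ maps $\Kk_\delta$ into itself and is a strict contraction there as soon as $\delta$ is small enough. Two elementary integrals will be used throughout, namely $\int_t^T(r-t)^{-1/2}dr=2(T-t)^{1/2}$ and $\int_t^T(r-t)^{-1/2}(T-r)^{-1/2}dr=\pi$, the latter being the $t$-independent Beta integral already met in Proposition \ref{prop:uniqueness_1}.

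First I would verify the self-mapping property. The free term $S(T-t)\varphi$ is continuous and bounded, and by Theorem \ref{thm:epsilon_Bounded_C} it satisfies $(T-t)^{1/2}\norma{G\nabla S(T-t)\varphi}_\infty\le C_T\norma{\varphi}_\infty$, so it already lies in $\Kk_\delta$ for every $\delta$. For the integral term $\tilde F(t,x)=\int_t^TS(r-t)F(r,v)(x)dr$, Proposition \ref{prop:regularity_mild_solution} (applied with $a=\delta$, $u=v$) gives that $\tilde F$ and $G\nabla\tilde F$ are continuous and bounded on $[T-\delta,T]\times\R^N$, whence $\tilde F\in C_b\cap C^{0,1}$. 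To bound its seminorm I combine the gradient estimate of Theorem \ref{thm:epsilon_Bounded_C} with \eqref{eq:F_lipschitz}(ii):
\[
\norma{G\nabla\tilde F(t,\cdot)}_\infty\le C_TL_\psi\int_t^T(r-t)^{-1/2}\left(1+(T-r)^{-1/2}[v]_{\Kk_\delta}\right)dr\le C_TL_\psi\left(2(T-t)^{1/2}+\pi[v]_{\Kk_\delta}\right).
\]
Multiplying by $(T-t)^{1/2}\le\delta^{1/2}$ yields $[\tilde F]_{\Kk_\delta}\le C_TL_\psi\left(2\delta+\pi\delta^{1/2}[v]_{\Kk_\delta}\right)<\infty$, and the crude bound $\norma{\tilde F(t,\cdot)}_\infty\le L_\psi\left((T-t)+2(T-t)^{1/2}[v]_{\Kk_\delta}\right)$ controls the sup part; hence $\Gamma v\in\Kk_\delta$.

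Next comes the contraction estimate. For $u,v\in\Kk_\delta$ one has $\Gamma u-\Gamma v=-\int_t^TS(r-t)\left(F(r,u)-F(r,v)\right)dr$, so \eqref{eq:F_lipschitz}(i) together with the contractivity of $S(t)$ gives
\[
\norma{(\Gamma u-\Gamma v)(t,\cdot)}_\infty\le L_\psi[u-v]_{\Kk_\delta}\int_t^T(T-r)^{-1/2}dr\le 2L_\psi\delta^{1/2}[u-v]_{\Kk_\delta},
\]
while Theorem \ref{thm:epsilon_Bounded_C} and \eqref{eq:F_lipschitz}(i) give
\[
\norma{G\nabla(\Gamma u-\Gamma v)(t,\cdot)}_\infty\le C_TL_\psi[u-v]_{\Kk_\delta}\int_t^T(r-t)^{-1/2}(T-r)^{-1/2}dr=\pi C_TL_\psi[u-v]_{\Kk_\delta},
\]
which is independent of $t$. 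Multiplying by $(T-t)^{1/2}\le\delta^{1/2}$ produces $[\Gamma u-\Gamma v]_{\Kk_\delta}\le\pi C_TL_\psi\delta^{1/2}[u-v]_{\Kk_\delta}$, and adding the two contributions gives $\norma{\Gamma u-\Gamma v}_{\Kk_\delta}\le L_\psi\delta^{1/2}(2+\pi C_T)\norma{u-v}_{\Kk_\delta}$.

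The one genuinely delicate point, which I would emphasize, is that the singular integral $\int_t^T(r-t)^{-1/2}(T-r)^{-1/2}dr=\pi$ does \emph{not} shrink with $\delta$, so the required smallness cannot be extracted from it; it is supplied entirely by the weight $(T-t)^{1/2}\le\delta^{1/2}$ built into the seminorm $[\cdot]_{\Kk_\delta}$. Choosing $\delta<T$ so small that $L_\psi\delta^{1/2}(2+\pi C_T)<1$ turns $\Gamma$ into a strict contraction of the Banach space $\Kk_\delta$ into itself, and the Banach fixed point theorem yields the unique fixed point, which is the sought mild solution.
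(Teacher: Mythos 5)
Your proof is correct, and its analytic core coincides with the paper's: the same decomposition into free and integral terms, the same use of Theorem \ref{thm:epsilon_Bounded_C} and \eqref{eq:F_lipschitz}, the same Beta integral $\int_t^T(r-t)^{-1/2}(T-r)^{-1/2}dr=\pi$, and the same key observation that the smallness must be supplied by the weight $(T-t)^{1/2}\le\delta^{1/2}$ rather than by the singular integral; even your smallness condition $L_\psi\delta^{1/2}(2+\pi C_T)<1$ matches the paper's choice $\delta=(4L_\psi+2\pi C_TL_\psi)^{-2}\wedge T$. The one structural difference is where the contraction principle is applied: the paper does not contract on all of $\Kk_\delta$ but on the closed ball $\Kk_{\delta,R}$ with $R=2\left(1+2C_T\right)\left(\norma{\varphi}_\infty+\delta L_\psi\right)$, checks that $\Gamma$ maps this ball into itself, and then invokes Proposition \ref{prop:uniqueness_1} to promote uniqueness of the fixed point from the ball to the whole of $\Kk_\delta$. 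Your global contraction is legitimate precisely because the Lipschitz bound \eqref{eq:F_lipschitz} is global on $\Kk_\delta$ (nothing forces a localization to a ball), and it buys two simplifications: no radius $R$ to tune, and uniqueness in $\Kk_\delta$ obtained directly, without the detour through Proposition \ref{prop:uniqueness_1}. The price, which you correctly flag and which the paper leaves implicit, is completeness of $\left(\Kk_\delta,\norma{\cdot}_{\Kk_\delta}\right)$; your sketch is the right one, and the uniform ellipticity $\nu(x)\ge\nu_0>0$ is exactly what makes $G^{-1}$ uniformly bounded, so that $\nabla h$ is recovered from the uniform limit of $G\nabla h_n$ on regions where $T-t$ stays bounded away from zero. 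Note that the paper's ball variant needs this completeness just the same, since a closed ball centered at the origin is complete only when the ambient space is; so nothing is lost, and your route is marginally more economical.
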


\begin{proof}[Proof]
Set
\[
\Kk_{\delta,R}=\left\{
\begin{array}{l}
h\in C_b\left([T-\delta,T]\times \R^N\right)\cap
C^{0,1}\left([T-\delta,T)\times\R^N\right): \\
\\
\displaystyle\norma*{h}_{\Kk_\delta}\leq R
\end{array}
\right\},
\]
endowed with the norm $\norma\cdot_{\Kk_\delta}$ (see \eqref{eq:normakalpha}).
Since $\Kk_{\delta,R}\subset \Kk_\delta$, Proposition \ref{prop:uniqueness_1} shows
that if we find that $\Gamma$ is a contraction in $\Kk_{\delta,R}$ then its unique fixed point
is the unique mild solution to problem
\eqref{eq:eq_Cauchy_NL} which belongs to $\Kk_\delta$.

Hence we prove that $\Gamma(v)\in \Kk_{\delta,R}$ for any
$v\in \Kk_{\delta,R}$, and there exists $c<1$ such that
\[
\norma {\Gamma(u)-\Gamma(v)}_{\Kk_{\delta,R}}\leq c\norma{u-v}_{\Kk_{\delta,R}},
\quad \forall u,v\in \Kk_{\delta,R}.
\]

For this purpose, we set
\[
C_T:=\sup_{t\in(0,T]}t^{1/2}\norma{G\nabla S(t)}
\]
and recall that $\sup_{t\in[0,T]}\norma{S(t)}\leq 1$ since $\{S(t)\}_{t\geq0}$ is a contraction semigroup.

Then by the second inequality in \eqref{eq:F_lipschitz} we have
\begin{equation}
\begin{split}
\norma {\Gamma (v(t,\cdot))}_\infty
& \leq \norma \varphi_\infty +\norma*{\int_t^T
S(r-t)F(r,v)dr}_\infty \\
& \quad +\norma*{\int_t^TS(r-t)F(r,0)dr}_\infty \\
& \leq \norma \varphi_\infty+2L_\psi(T-t)^{1/2}\norma
v_{\Kk_{\delta,R}}+(T-t)L_\psi \\
& \leq \norma \varphi_\infty+2L_\psi\delta^{1/2}\norma
v_{\Kk_{\delta,R}}+\delta L_\psi.
\end{split}
\label{eq:fix_point1_intoitself1}
\end{equation}
and
\begin{equation}
\begin{split}
(T-t)^{1/2} & \norma*{G\nabla\Gamma (v(t,\cdot))}_\infty \\
& \leq C_T\norma
\varphi_\infty+(T-t)^{1/2}C_TL_\psi\int_t^T(r-t)^{-1/2}\left(\norma*{G\nabla
v(r,\cdot)}_\infty+1\right) dr \\
& \leq C_T\norma
\varphi_\infty+C_TL_\psi(T-t)^{1/2}\norma*{v}_{\Kk_{\delta,R}}\int_t^T(r-t)^{-1/2}
(T-r)^{-1/2}dr \\
& \quad +2(T-t)C_TL_\psi\\
& \leq C_T\norma
\varphi_\infty+\pi C_TL_\psi(T-t)^{1/2}\norma*{v}_{\Kk_{\delta,R}}+2(T-t)
C_TL_\psi
\\
& \leq C_T\norma
\varphi_\infty+\pi\delta^{1/2}C_TL_\psi\norma*{v}_{\Kk_{\delta,R}}+2\delta
C_TL_\psi.
\end{split}
\label{eq:fix_point1_intoitself2}
\end{equation}

Moreover,
\begin{equation}
\begin{split}
\norma*{\Gamma (u(t,\cdot))-\Gamma (v(t,\cdot))}_\infty
& \leq \int_t^T\norma*{S(r-t)\left(F(r,u)-F(r,v)\right)}_\infty dr \\
& \leq 2L_\psi\delta^{1/2}\norma{u-v}_{\Kk_{\delta,R}}
\end{split}
\label{eq:fix_point1_contraction1}
\end{equation}
and
\begin{equation}
\begin{split}
& (T-t)^{1/2}\norma{G\nabla \Gamma (u(t,\cdot))-G\nabla\Gamma
(v(t,\cdot))}_\infty \\
& \quad \leq (T-t)^{1/2}C_TL_\psi\int_t^T(r-t)^{-1/2}\norma*{G\nabla
u(r,\cdot)-G\nabla v(r,\cdot)}_\infty dr \\
& \quad \leq
(T-t)^{1/2}C_TL_\psi\norma{u-v}_{\Kk_{\delta,R}}\int_t^T(r-t)^{-1/2}(T-r)^{-1/2}dr
\\
& \quad \leq \pi(T-t)^{1/2}C_TL_\psi\norma{u-v}_{\Kk_{\delta,R}} \\
& \quad \leq \pi\delta^{1/2}C_TL_\psi\norma{u-v}_{\Kk_{\delta,R}}.
\end{split}
\label{eq:fix_point1_contraction2}
\end{equation}

Now we have to choose $\delta$ and $R$. Set
\[
\delta=(4L_\psi+2\pi C_TL_\psi)^{-2}\wedge T
\]
in \eqref{eq:fix_point1_contraction1} and \eqref{eq:fix_point1_contraction2}; it
immediately follows that
\[
\begin{split}
\norma{\Gamma(u)-\Gamma(v)}_{\Kk_{\delta,R}}
& \leq
2L_\psi\delta^{1/2}\norma{u-v}_{\Kk_{\delta,R}}+\delta^{1/2}
\pi C_TL_\psi\norma { u-v
}_{\Kk_{\delta,R}} \\
& = \delta^{1/2}\left(2L_\psi+\pi C_TL_\psi\right)\norma{u-v}_{\Kk_{\delta,R}}
\\
& \leq \frac{1}{2}\norma{u-v}_{\Kk_{\delta,R}},
\end{split}
\]
and so $\Gamma$ is a $1/2$-contraction. To show that $\Gamma$ maps
$X_{\delta,R}$ into itself, it is sufficient to take
\[
R=2\left(1+2C_T\right)\left(\norma
\varphi_\infty+\delta L_\psi\right).
\]

Indeed, substituting in \eqref{eq:fix_point1_intoitself1} and
\eqref{eq:fix_point1_intoitself2}, we get
\[
\begin{split}
\norma*{\Gamma(v)}_{\Kk_{\delta,R}}
& \leq \norma \varphi_\infty+2L_\psi\delta^{1/2}\norma
u_{\Kk_{\delta,R}}+\delta L_\psi \\
& \quad +C_T\norma
\varphi_\infty+\delta^{1/2}\pi C_TL_\psi\norma*{v}_{\Kk_{\delta,R}}+2\delta
C_TL_\psi
\\
& \leq \left(1+2C_T\right)\left(\norma
\varphi_\infty+\delta L_\psi\right)
+\delta^{1/2}\left(2L_\psi+\pi C_TL_\psi\right)\norma v_{\Kk_{\delta,R}} \\
& \leq \frac R2+\frac R2\leq R.
\end{split}
\]
\end{proof}

\begin{rmk}
If $\varphi\in C^1_b(\R^N)$ the same arguments as in the proof of Theorem
\ref{thm:fixed_point_NL} and Proposition \ref{prop:uniqueness_1} show that the
operator $\Gamma$ in \eqref{eq:Gamma_1} admits a unique fixed point in the
space $\Kk_\delta$ defined by
\[
\Kk_\delta=
\left\{
\begin{array}{l}
\displaystyle h\in C_b\left([T-\delta,T]\times\R^N\right)\cap
C^{0,1}\left([T-\delta,T]\times\R^N\right): \\
\\
\displaystyle \sup_{(t,x)\in(T-\delta,T)\times\R^N}
\abs* { G
(x)\nabla h(t,x)}<\infty.
\end{array}
\right\}
\]
for some $\delta>0$.
\label{rmk:solution_NL_1}
\end{rmk}

Now, we can construct the maximally defined solution of \eqref{eq:eq_Cauchy_NL}.
Set
\[
\left\{
\begin{array}{l}
\tau(\varphi)=\inf\{0<a<T:\textrm{ problem \eqref{eq:eq_Cauchy_NL} has a mild solution
$v_a$ in $\Kk_a$}\}, \\
\\
v(t,x)=v_a(t,x), \quad \textrm{if $t\geq T-a$}.
\end{array}
\right.
\]

The function $v$ is well defined, thanks to Theorem \ref{thm:fixed_point_NL},
in the interval
\[
I(\varphi)=\cup \{[T-a,T]:\textrm{ problem \eqref{eq:eq_Cauchy_NL} has a mild
solution
$v_a$ in $\Kk_a$}\},
\]
and we have $\tau(\varphi)=\inf I(\varphi)$.

\begin{prop}
If $\varphi\in C_b(\R^N)$ is such that $I(\varphi)\neq [0,T]$, and $F$ satisfies
\eqref{eq:F_lipschitz}, then the function
\[
t\mapsto (T-t)^{1/2}\norma{G\nabla v(t,\cdot)}_\infty
\]
is unbounded in $I(\varphi)$.
\label{prop:unboudedness_1}
\end{prop}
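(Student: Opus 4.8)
The plan is to argue by contradiction through a restart-and-glue scheme: a bounded weighted gradient would let us continue the solution strictly beyond $\tau(\varphi)$, against the definition of $I(\varphi)$. Write $\tau=\tau(\varphi)=\inf I(\varphi)$; since $I(\varphi)\neq[0,T]$ we have $0<\tau<T$ (local existence, Theorem \ref{thm:fixed_point_NL}, forces $\tau<T$), and the maximal mild solution $v$ is defined on $(\tau,T]$. Suppose, contrary to the claim, that there is $M>0$ with $(T-t)^{1/2}\norma{G\nabla v(t,\cdot)}_\infty\leq M$ for every $t\in I(\varphi)$. The only consequence I really need is that, for $t_1$ in a fixed right neighbourhood of $\tau$, say $t_1\in(\tau,(T+\tau)/2]$, the slice $g:=v(t_1,\cdot)$ stays in a fixed bounded set: from the mild formula \eqref{eq:mild_solution} and \eqref{eq:F_lipschitz} one gets $\norma g_\infty\leq\norma\varphi_\infty+L_\psi(T-\tau)+2L_\psi M(T-\tau)^{1/2}$, and, more importantly,
\[
\norma{G\nabla v(t_1,\cdot)}_\infty\leq M\,(T-t_1)^{-1/2}\leq M\left(\tfrac{T-\tau}{2}\right)^{-1/2},
\]
a bound uniform in $t_1$. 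Thus $g$ belongs, uniformly, to the class of terminal data with bounded weighted gradient treated in Proposition \ref{prop:Bounded_C1} and Remark \ref{rmk:solution_NL_1}.

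Next I would restart \eqref{eq:eq_Cauchy_NL} at the interior time $t_1=\tau+\epsilon$, with $\epsilon>0$ small and to be fixed, treating $g=v(t_1,\cdot)$ as terminal datum at time $t_1$. Because $g$ has bounded weighted gradient, the regular version of the fixed point argument (Remark \ref{rmk:solution_NL_1}, which rests on Proposition \ref{prop:Bounded_C1}) yields a mild solution $\tilde v$ on $[t_1-\delta_1,t_1]$ whose weighted gradient $G\nabla\tilde v$ stays bounded up to the new terminal time $t_1$, with no $(t_1-t)^{-1/2}$ singularity. Inspecting the contraction estimates of Theorem \ref{thm:fixed_point_NL} with $T$ replaced by $t_1$, and using $C_{t_1}=\sup_{s\in(0,t_1]}s^{1/2}\norma{G\nabla S(s)}\leq C_T$ together with the uniform bound on $\norma{G\nabla g}_\infty$ just obtained, I would extract a lower bound $\delta_1\geq\delta_\ast>0$ with $\delta_\ast$ depending only on $L_\psi$, $C_T$, $M$, and $\tau$, hence \emph{independent} of $\epsilon$.

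I would then glue $\tilde v$ on $[t_1-\delta_1,t_1]$ to $v$ on $[t_1,T]$. Using the semigroup law $S(r-t)=S(r-t_1)S(t_1-t)$ for $r\geq t_1$ and the representation $v(t_1,\cdot)=S(T-t_1)\varphi-\int_{t_1}^TS(r-t_1)F(r,v)\,dr$, a direct computation shows that the glued function satisfies \eqref{eq:mild_solution} on all of $[t_1-\delta_1,T]$. Choosing $\epsilon<\delta_\ast$ gives $t_1-\delta_1\leq\tau+\epsilon-\delta_\ast<\tau$, while the boundedness of $G\nabla\tilde v$ near $t_1$ ensures that the weighted gradient of the glued function is bounded on $[t_1-\delta_1,T)$, with only the genuine singularity at $T$ surviving; hence the glued function lies in $\Kk_{a'}$ with $a'=T-(t_1-\delta_1)>\tau$. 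By uniqueness (Proposition \ref{prop:uniqueness_1}) this is a mild solution on an interval strictly larger than $I(\varphi)$, contradicting $\tau=\inf I(\varphi)$ and proving the claim.

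The main obstacle is the uniform lower bound $\delta_1\geq\delta_\ast$, and it is exactly here that the standing boundedness hypothesis enters: it keeps the restart datum $g=v(t_1,\cdot)$ in a fixed bounded set as $t_1\downarrow\tau$, preventing the local existence time from collapsing to $0$; without this control $\delta_1$ could shrink with $\epsilon$ and no extension would follow. A second, more technical, point is to certify that the restarted solution carries \emph{no} interior gradient singularity at $t_1$, so that the glued function genuinely lies in $\Kk_{a'}$ and not merely in a space permitting a blow-up at the interior time $t_1$; this is precisely why the regular existence of Remark \ref{rmk:solution_NL_1} and Proposition \ref{prop:Bounded_C1}, rather than the plain Theorem \ref{thm:fixed_point_NL}, must be invoked.
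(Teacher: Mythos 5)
Your proof is correct, but it follows a genuinely different continuation scheme from the paper's. The paper works at the boundary time: under the contradiction hypothesis it invokes Proposition \ref{prop:regularity_mild_solution} to show that $(t,x)\mapsto\int_t^T S(r-t)F(r,v)(x)\,dr$ is continuous and bounded up to $t=\tau$, so that $v$ extends to a trace $v(\tau,\cdot)\in C_b(\R^N)$; it then restarts exactly at $\tau$ via Theorem \ref{thm:fixed_point_NL} and glues. You never touch $t=\tau$: you restart at interior times $t_1=\tau+\epsilon$, where the slice $v(t_1,\cdot)$ is automatically defined, and instead need the restarted existence time to be bounded below uniformly in $\epsilon$. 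That uniformity does hold, and more cheaply than you suggest: in Theorem \ref{thm:fixed_point_NL} the existence time $\delta=(4L_\psi+2\pi C_TL_\psi)^{-2}\wedge T$ depends only on $L_\psi$ and $C_T$, never on the terminal datum (the datum enters only the radius $R$ of the invariant ball), so the uniform bound on $g=v(t_1,\cdot)$ is not what prevents $\delta_1$ from collapsing; where that bound genuinely matters is in guaranteeing $G\nabla g$ bounded, hence (by uniform ellipticity, $\nu\geq\nu_0$, so $G^{-1}$ is bounded) $g\in C^1_b(\R^N)$, which licenses the restart of Remark \ref{rmk:solution_NL_1}. Your insistence on that $C^1_b$ restart, precisely to exclude an interior gradient singularity at the gluing time, is a real refinement over the paper: the paper restarts from a merely $C_b$ datum, so its restarted solution $w$ may a priori have $\norma{G\nabla w(t,\cdot)}_\infty\sim(\tau-t)^{-1/2}$ as $t\uparrow\tau$, and since the weight $(T-t)^{1/2}$ in the definition of $\Kk_a$ is centered at $T$ rather than at $\tau$, the glued function's membership in $\Kk_{T-\tau+\delta}$ is left unverified there; your route closes this point (and the paper could close it the same way, since under the contradiction hypothesis Proposition \ref{prop:regularity_mild_solution} also gives $G\nabla v(\tau,\cdot)$ bounded, hence $v(\tau,\cdot)\in C^1_b$). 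One small imprecision on your side: $I(\varphi)\neq[0,T]$ does not force $\tau>0$, since $I(\varphi)$ could equal $(0,T]$; but this is harmless, as in that case your construction produces a mild solution covering $[0,T]$ and the contradiction is the same.
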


\begin{proof}[Proof]
Even if proof is rather classical, for the reader's convenience we provide the details. Let us suppose that the function
\[
t\mapsto (T-t)^{1/2}\norma{G\nabla v(t,\cdot)}_\infty
\]
is bounded in $I(\varphi)$, and let $v$ be the maximally defined solution to
\eqref{eq:eq_Cauchy_NL}. Moreover, we set $\tau(\varphi)=\tau$.
$S(\cdot)\varphi$ is continuous
in $(0,\infty)\times\R^N$, and by Proposition \ref{prop:regularity_mild_solution} the
function
\[
(t,x)\mapsto\int_t^T S(r-t)F(r,v)(x)dr
\]
is continuous and bounded in $[\tau,T]\times\R^N$. Hence, we can extend $v$ up
to $t=\tau$, defining
\[
v(\tau,x):=T(\tau)\varphi(x)-\int_\tau^T S(r-\tau)F(r,v)(x)dr.
\]

Since $v(\tau,\cdot)\in C_b(\R^N)$, by Theorem \ref{thm:fixed_point_NL} the
Cauchy problem
\[
\left\{
\begin{array}{ll}
w(t,x)+Aw(t,x)=\psi(x,G\nabla w(t,\cdot))(x), \quad t<\tau, & x\in\R^N, \\
\\
w(\tau,x)=v(\tau, x), & x\in\R^N,
\end{array}
\right.
\]
admits a unique mild solution in $[\tau-\delta,\tau]$, for some $\delta>0$. If
we define
\[
z(t,x)=
\begin{cases}
w(t,x), & \tau-\delta\leq t\leq\tau, \quad x\in\R^N, \\
\\
v(t,x), & \tau\leq t\leq T, \quad x\in\R^N, \\
\end{cases}
\]
then $z$ is a mild solution of \eqref{eq:eq_Cauchy_NL} in
$[\tau-\delta,T]\times\R^N$ which extends $v$, and it contradicts the
maximality of $v$.
\end{proof}

\begin{prop}
If $F$ satisfies \eqref{hyp:g_lipschitz}, then the mild solution $v$ of
problem \eqref{eq:eq_Cauchy_NL} exists in $[0,T]\times\R^N$.
\label{prop:linear_growth1}
\end{prop}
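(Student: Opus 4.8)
The plan is to combine the blow-up alternative of Proposition \ref{prop:unboudedness_1} with an a priori bound on the weighted gradient seminorm. By Proposition \ref{prop:unboudedness_1} it suffices to prove that the map $t\mapsto g(t):=(T-t)^{1/2}\norma{G\nabla v(t,\cdot)}_\infty$ stays bounded on the maximal interval $I(\varphi)$, since boundedness forces $I(\varphi)=[0,T]$.

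First I would derive an integral inequality for $g$. Applying $G\nabla$ to the mild formula \eqref{eq:mild_solution}, using the weighted gradient estimate \eqref{eq:stimadergrad0} of Theorem \ref{thm:epsilon_Bounded_C} for $G\nabla S(\cdot)$, and estimating $\norma{F(r,v)}_\infty\le L_\psi(1+\norma{G\nabla v(r,\cdot)}_\infty)$ by Hypothesis \ref{hyp:g_lipschitz}, one obtains
\[
\norma{G\nabla v(t,\cdot)}_\infty\le \frac{C_T\norma\varphi_\infty}{(T-t)^{1/2}}+C_TL_\psi\int_t^T(r-t)^{-1/2}\big(1+(T-r)^{-1/2}g(r)\big)\,dr.
\]
Evaluating $\int_t^T(r-t)^{-1/2}\,dr=2(T-t)^{1/2}$ and $\int_t^T(r-t)^{-1/2}(T-r)^{-1/2}\,dr=\pi$, and multiplying through by $(T-t)^{1/2}$, this yields
\[
g(t)\le C_T\norma\varphi_\infty+2C_TL_\psi(T-t)+\pi C_TL_\psi(T-t)^{1/2}\sup_{r\in[t,T)}g(r).
\]

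Next I would bootstrap this over windows of a fixed length. On a window $[t_0,T)$ short enough that $\pi C_TL_\psi(T-t_0)^{1/2}\le\tfrac12$, the supremum term is absorbed into the left-hand side, bounding $g$ on $[t_0,T)$ by a constant; the admissible window length $\rho=(2\pi C_TL_\psi)^{-2}$ depends only on $C_T$ and $L_\psi$, never on the datum. Having controlled $[T-\rho,T)$, I rewrite $v$ as a mild solution with terminal time $T-\rho$ and datum $v(T-\rho,\cdot)$, which lies in $C^1_b(\R^N)$ because $v\in C^{0,1}$ near the interior time $T-\rho<T$ with $\norma{G\nabla v(T-\rho,\cdot)}_\infty<\infty$; Remark \ref{rmk:solution_NL_1} together with Proposition \ref{prop:Bounded_C1} then supplies the same kind of estimate on $[T-2\rho,T-\rho)$, now with no gradient blow-up, so $g$ is again bounded there. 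Since the windows all have the same length $\rho$ and every semigroup-time arising in a backward step of length $\rho$ stays in $(0,T]$ (so the constant $C_T$ is reusable), finitely many windows cover $I(\varphi)$, giving a uniform bound on $g$ over $I(\varphi)$ that contradicts Proposition \ref{prop:unboudedness_1} unless $I(\varphi)=[0,T]$.

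The main obstacle is the weakly singular Beta-type kernel $(r-t)^{-1/2}(T-r)^{-1/2}$: one cannot close a global Gronwall estimate in a single pass, which is precisely why the bootstrap over fixed-length windows is required. The delicate points are that the window length $\rho$ must be datum-independent, which is guaranteed by the uniformity of $C_T$ from Theorem \ref{thm:epsilon_Bounded_C} (valid because all semigroup arguments lie in $(0,T]$), and that the intermediate data $v(T-k\rho,\cdot)$ be of class $C^1_b$, so that Remark \ref{rmk:solution_NL_1} removes the interior gradient singularities and the glued function indeed belongs to $\Kk_T$.
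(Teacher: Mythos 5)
Your proposal is correct, but after the common first step it diverges from the paper's argument. Both proofs invoke Proposition \ref{prop:unboudedness_1} to reduce the statement to boundedness of $g(t)=(T-t)^{1/2}\norma{G\nabla v(t,\cdot)}_\infty$ on $I(\varphi)$, and both derive the same singular Volterra inequality from \eqref{eq:stimadergrad0} and \eqref{eq:F_lipschitz}. The paper, however, does not localize in time: it substitutes the inequality into itself once, exchanges the order of integration, and uses $\int_t^s(r-t)^{-1/2}(s-r)^{-1/2}\,dr=\pi$ to collapse the composed singular kernel into a constant, arriving at
\[
g(t)\leq \left(C_T\norma{\varphi}_\infty+2TL_\psi\right)\left(1+\pi T^{1/2}L_\psi\right)+\pi L_\psi^2(T-t)^{1/2}\int_t^T(T-s)^{-1/2}g(s)\,ds,
\]
to which the generalized Gronwall lemma applies directly and yields the bound on all of $I(\varphi)$ at once. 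So your claim that ``one cannot close a global Gronwall estimate in a single pass'' holds only for the naive Gronwall; the iterate-once trick closes it globally, with no windows, no restarts, and no use of $C^1_b$ data, Proposition \ref{prop:Bounded_C1} or Remark \ref{rmk:solution_NL_1}. Your bootstrap does work, but its cost is exactly the extra machinery you gesture at, which must be made rigorous: (i) each absorption presupposes $\sup_{r\in[t_1,T)}g(r)<\infty$, which is valid for $t_1$ interior to $I(\varphi)$ because $v|_{[t_1,T]}\in\Kk_{T-t_1}$, after which one checks that the absorbed bound is uniform in $t_1$; (ii) the restart needs the flow identity $v(t,\cdot)=S(T'-t)v(T',\cdot)-\int_t^{T'}S(r-t)F(r,v)\,dr$, which is nowhere stated in the paper and requires a Fubini justification through the transition kernels of $S(\cdot)$; (iii) the assertion $v(T-\rho,\cdot)\in C^1_b(\R^N)$ uses the uniform ellipticity $\nu(x)\geq\nu_0>0$ of Hypothesis \ref{hyp:maximum_principle} to convert boundedness of $G\nabla v$ into boundedness of $\nabla v$. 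In exchange, your route avoids the generalized Gronwall lemma altogether (only elementary absorption) and gives, as a by-product, unweighted bounds on $\norma{G\nabla v(t,\cdot)}_\infty$ away from $t=T$, whereas the paper's argument is shorter and never leaves the weighted framework of $\Kk_T$.
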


\begin{proof}[Proof]
By Proposition \ref{prop:unboudedness_1}, it is enough to show that the function
\[
(t,x)\mapsto (T-t)^{1/2}G(x)\nabla v(t,x)
\]
is bounded in $I(\varphi) \times \R^N$.

For sake of simplicity, we set
\[
l(t):=\norma{G\nabla v(t,\cdot)}_\infty,
\]
where $v$ is the maximally defined solution of problem \eqref{eq:eq_Cauchy_NL}.
Then for any $t\in I(\varphi)$ and $x\in\R^N$,
\[
\begin{split}
& (T-t)^{1/2}l(t) \\
& \quad \leq C_T\norma \varphi_\infty
+L_\psi\int_t^T(T-t)^{1/2}(r-t)^{-1/2}\left(1+l(r)\right)dr \\
& \quad \leq C_T\norma
\varphi_\infty+2TL_\psi \\
& \qquad +L_\psi(T-t)^{1/2}\int_t^T(r-t)^{-1/2}(T-r)^{-1/2}(T-r)^{1/2}l(r) dr \\
& \leq C_T\norma
\varphi_\infty+2TL_\psi \\
& \qquad +L_\psi(T-t)^{1/2}\int_t^T(r-t)^{-1/2}(T-r)^{-1/2}\left(C_T\norma
\varphi_\infty+2TL_\psi\right)dr \\
& \qquad +L_\psi^2(T-t)^{1/2}\int_t^T(r-t)^{-1/2} \left(\int_r^T(s-r)^{-1/2}(T-s)^{-1/2}(T-s)^{1/2}l(s)ds\right)dr \\
& \quad \leq (C_T\norma\varphi_\infty+2TL_\psi)(1+T^{1/2}\pi L_\psi) \\
& \qquad +\pi L_\psi^2(T-t)^{1/2}\int_t^T(T-s)^{-1/2}(T-s)^{1/2}l(s)ds.
\end{split}
\]

The generalized Gronwall Lemma guarantees that the function
$(t,x)\mapsto (T-t)^{1/2}G(x)\nabla v(t,x)$ is bounded in
$I(\varphi)\times \R^N$, and the thesis follows.
\end{proof}

\begin{rmk}
Since the problem \eqref{eq:eq_Cauchy_NL} is autonomous, in Propositions \ref{prop:unboudedness_1} and \ref{prop:linear_growth1} we can replace $[0,T]$ with $(-\infty,T]$.
\end{rmk}

\begin{rmk}
Under the Hypotheses of Proposition \ref{prop:linear_growth1}, if $\varphi\in C^1_b(\R^N)$ then the mild solution $v$ of problem \eqref{eq:eq_Cauchy_NL} exists in $(-\infty,T]\times\R^N$, it belongs to $C^{0,1}((-\infty,T]\times\R^N)$ and it is bounded in $(a,T]\times\R^N$, for any $a<T$.
\label{rmk:est_solution_NL_2}
\end{rmk}

%
%
%

\section{The Forward Backward Stochastic Differential Equation Associated to the
Semi-Linear PDE}
\label{sec:FBSDE}

Let $(\Omega,\F,\Pp)$ be a complete probability space, $(W_t)_{t\geq0}$ a
real Brownian motion and $\mathcal N$ the family of elements of $\F$ of
probability $0$. We define as $\F^W_t$ the natural filtration with respect to
$W_t$, completed by the $\Pp-$null set of $\F$, i.e.
\[
\F^W_t:=\sigma\{W_s:0\leq s\leq t,\ \mathcal N\}.
\]

In this setting we study the Forward Backward Stochastic Differential
Equation
\begin{equation}
\left\{
\begin{array}{ll}
dY_\tau=\psi(X_\tau,Z_\tau)d\tau+Z_\tau dW_\tau, & \tau\in[t,T], \\
\\
dX_\tau=B(X_\tau) d\tau+G(X_\tau) dW_\tau, & \tau\in[t,T], \\
\\
Y_T=\varphi(X_T), \\
\\
X_t=x, & x\in\R^N,
\end{array}
\right.
\label{eq:FBSDE}
\tag{FBSDE}
\end{equation}
where
\[
\psi:\R^N\times\R^N\longrightarrow\R, \quad \varphi:\R^N\longrightarrow\R,
\]
are given Borel functions, and
\[
B,G:\R^N\longrightarrow\R
\]
are Borel measurable.

For any $p\in[1,\infty)$, let $\Hh^p$ be the space of progressively measurable with respect to $\F_t^W$ random processes $X_t$
such
that
\[
\norma X_{\Hh^p}:= \E\sup_{t\in[0,T]}\abs{X_t}^p<\infty,
\]
and let $\K$ be the space of $(\F_t^W)-$progressively measurable processes $Y,Z$ such that
\[
\norma{(Y,Z)}^2_{{\it{cont}}}:=\E\sup_{t\in[0,T]}\abs{Y_t}^2+\E\int_0^T\abs{
Z_\sigma}^2d\sigma<\infty.
\]

Moreover, we denote by $Y(s,t,x)$ and $Z(s,t,x)$ the solution to \eqref{eq:FBSDE}.

Throughout this section we assume the following additional assumptions on $B$
and $G$:
\begin{hyp}
\label{hyp:growth_FBSDE}
There exists $C>0$ such that, for all $x,x',z,z'\in\R^N$, we have
\begin{equation}
\abs{B(x)-B(x')}+\abs{G(x)-G(x')}\leq C\abs{x-x'}.
\end{equation}
\end{hyp}

If Hypothesis \ref{hyp:growth_FBSDE} is satisfied and
\[
\abs{\varphi(x)}+\abs{\psi(x,0)} \leq C(1+\abs x), \quad \forall x\in\R^N.
\]
then system \eqref{eq:FBSDE} admits a unique solution $(X,Y,Z)$, where $X\in\Hh^p$,
for any $p\in[1,\infty)$, and $(Y,Z)\in\K$ (see \cite{pardoux-peng}). Henceforth, $X$ denotes the solution
to the forward equation in \eqref{eq:FBSDE}.

\begin{rmk}
The hypotheses on the growth of $B$ and $G$ in \ref{hyp:growth_FBSDE} are compatible with the growth conditions on the coefficients of the operator $A$ in Hypothesis \ref{hyp:maximum_principle} (see Example \ref{ex:coeffA1}).
\end{rmk}


The parabolic Cauchy problem studied in Section \ref{sec:BPDE}
\[
\left\{
\begin{array}{ll}
D_tv(t,x)+Av(t,x)=\psi(x,G(x)\nabla_xv(t,x)), & x\in\R^N, \quad t\in[0,T), \\
\\
v(T,x)=\varphi(x), & x\in\R^N,
\end{array}
\right.
\]
is strictly linked with \eqref{eq:FBSDE}. Indeed,
if $v\in C^{1,2}([0,T]\times\R^N)$ is a solution to \eqref{eq:eq_Cauchy_NL}, then
$v(t,x)=Y(t,t,x)$. Conversely, if $\psi,\varphi,B,G$, satisfy stronger
conditions, then, setting $v(t,x)=Y(t,t,x)$, it turns out that $v\in
C^{1,2}([0,T]\times\R^N)$ and it is a solution to \eqref{eq:eq_Cauchy_NL} (see \cite{pardoux-peng}).

We want to relax regularity conditions on $\psi$ and $\varphi$, and growth conditions on $B$ and $G$, and prove that $V$ is still a solution to \eqref{eq:FBSDE}. For this purpose, we will use the results in Section \ref{sec:BPDE}. Notice that since $G$ may be unbounded a straightforward application of Bismut-Elworthy formula as in \cite{cerrai} is not allowed.


Assume that $G, B, \psi$ satisfy Hypotheses \ref{hyp:g_lipschitz} and \ref{hyp:maximum_principle}. Moreover, suppose that $\varphi\in BUC(\R^N)$. Hence, by Theorem \ref{thm:fixed_point_NL} and
Proposition \ref{prop:linear_growth1}, there exists a unique solution $v$ to
\eqref{eq:eq_Cauchy_NL} in $[0,T]$ which belongs to $\Kk_T$ (see Definition
\ref{def:K_spaces}).

To use the result of \cite{pardoux-peng}, we approximate the functions $\varphi,\psi$ by
convolution: let $(\rho_n)_{n\in\N}$ be a standard sequence of mollifiers
in $\R^\N$ and set
\[
\varphi_n=\varphi\star\rho_n, \quad \psi_n=\psi\star_z\rho_n,
\]
where $\star_z$ denotes the convolution with respect only to the variable $z$.

$\psi_n$ and $\varphi_n$, are smooth functions and $\varphi_n$ are bounded. In particular, for any $n\in\N$ we have that $\norma {\varphi_n}_\infty\leq\norma\varphi_\infty$ and by \eqref{eq:g_lipschitz} we deduce that for any $n,m\in\N$ and $x,z_1,z_2\in\R^N$, it holds that
\begin{align}
\abs{\psi_n(x,z_1)-\psi(x,z_2)} & \leq L_\psi\abs{z_1-z_2}+\frac{L_\psi}n, \label{eq:prop_psi_n}
\\
\abs{\psi_n(x,z_1)-\psi_m(x,z_2)} & \leq
L_\psi\abs{z_n-z_m}+L_\psi\left(\frac1n+\frac1m\right). \label{eq:prop_psi_nm}
\end{align}

For any $n\in\N$, let us consider the approximated Cauchy problem
\begin{equation}
\left\{
\begin{array}{ll}
D_tv_n(t,x)+Av_n(t,x)=\psi_n(x,G(x)\nabla
v_n(t,x)), \quad t\in[0,T), & x\in\R^N, \\
\\
v_n(T,x)=\varphi_n(x), & x\in\R^N,
\end{array}
\right.
\label{eq:appr_Cauchy_problem1}
\end{equation}
whose mild solution is given by (see Theorem \ref{thm:fixed_point_NL})
\begin{equation}
\begin{split}
v_n(t,x)
& = S(T-t)\varphi_n(x)-\int_t^TS(r-t)\psi_n(x,G(x)\nabla v_n(r,x))dr \\
& = S(T-t)\varphi_n(x)-\int_t^TS(r-t)F_n(r,v_n)(x)dr, \\
\end{split}
\label{eq:mild_solution_n}
\end{equation}
where
\[
F_n:(0,T)\times \Kk_T\longrightarrow C_b(\R^N), \quad F_n(t,u)(x):=\psi_n(x,G(x)\nabla u(t,x)).
\]

Remarks \ref{rmk:solution_NL_1} and \ref{rmk:est_solution_NL_2} guarantee that $v_n\in C_b([0,T]\times\R^N)$ and
$\norma{G\nabla v_n(t,\cdot)}_\infty\leq C_n$, for any $t\in(0,T)$ and any $n\in\N$.

We recall that, since $\varphi\in C_b(\R^N)$ and the coefficients of $B,Q$ belong to
$C^\delta_{\it{loc}}(\R^N)$, $S(\cdot)f(\cdot)\in
C_{\it{loc}}^{1+\delta/2,2+\delta}((0,\infty)\times\R^N)$ (see Hypothesis \ref{hyp:maximum_principle}). Hence the Hypotheses in \cite{pardoux-peng} are
satisfied. It means that the function $v_n\in C^{0,1}([0,T]\times\R^N)$, and
$v_n(t,x)=Y^n(t,t,x)$, where $Y^n$ is the solution to
\begin{equation}
\left\{
\begin{array}{ll}
dY^n_\tau=\psi_n(X_\tau,Z^n_\tau)d\tau+Z^n_\tau dW_\tau, & \tau\in[t,T], \\
\\
dX_\tau=B(X_\tau) d\tau+G(X_\tau) dW_\tau, & \tau\in[t,T], \\
\\
Y^n_T=\varphi_n(X_T), \\
\\
X_t=x, & x\in\R^N.
\end{array}
\right.
\label{eq:FBSDE_n}
\end{equation}

Now we need to study how $v_n$ and $G\nabla v_n$ converge to $v$ and $G\nabla
v$, respectively.
We claim that, for any fixed $t\in[0,T)$, $v_n(t,\cdot)$ and $G\nabla
v_n(t,\cdot)$ converge uniformly. Then, we can define
\begin{equation}
Y(s,t,x):=v(s,X(s,t,x)), \quad Z(s,t,x):=G(X(s,t,x))\nabla v(s,X(s,t,x)),
\label{eq:identification_formulae}
\end{equation}
for any $t\in[0,T]$, $t\leq s< T$, and $x\in\R^N$. Finally, we will show
that $(X,Y,Z)$ is a solution to \eqref{eq:FBSDE}.

To prove the above claim, we need an intermediate result, contained in the
following lemma.

\begin{lem}
$[v_n]_{\Kk_T}$ is uniformly bounded.
\label{lem:unif_boundedness_vn}
\end{lem}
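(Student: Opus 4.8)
The plan is to rerun the estimate from the proof of Proposition \ref{prop:linear_growth1} for each index $n$ and to verify that every constant produced along the way is independent of $n$. The key observation is that the approximation does not degrade the data. On one hand $\norma{\varphi_n}_\infty\leq\norma\varphi_\infty$. On the other hand the mollified nonlinearity inherits the Lipschitz behaviour of $\psi$: writing $\psi_n(x,z)=\int_{\R^N}\psi(x,z-w)\rho_n(w)\,dw$ and using the first inequality in \eqref{eq:g_lipschitz}, one gets $\abs{\psi_n(x,z_1)-\psi_n(x,z_2)}\leq L_\psi\abs{z_1-z_2}$ for all $x,z_1,z_2\in\R^N$, while \eqref{eq:prop_psi_n} with $z_1=z_2=0$ together with the bound $\abs{\psi(x,0)}\leq L_\psi$ yields $\abs{\psi_n(x,0)}\leq L_\psi+L_\psi/n\leq 2L_\psi$. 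Consequently $F_n$ obeys the estimate
\[
\norma{F_n(r,v_n)}_\infty\leq 2L_\psi+L_\psi\norma{G\nabla v_n(r,\cdot)}_\infty,
\]
with constants that do not depend on $n$.

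First I would set $l_n(t):=\norma{G\nabla v_n(t,\cdot)}_\infty$ and insert the mild formula \eqref{eq:mild_solution_n} together with the weighted gradient estimate \eqref{eq:stimadergrad0} of Theorem \ref{thm:epsilon_Bounded_C}, obtaining
\[
l_n(t)\leq \frac{C_T}{(T-t)^{1/2}}\norma\varphi_\infty+C_T\int_t^T(r-t)^{-1/2}\left(2L_\psi+L_\psi l_n(r)\right)dr .
\]
Multiplying by $(T-t)^{1/2}$ and treating the constant part through $\int_t^T(r-t)^{-1/2}\,dr=2(T-t)^{1/2}$ (which contributes at most $4C_TL_\psi T$), the surviving term is $C_TL_\psi(T-t)^{1/2}\int_t^T(r-t)^{-1/2}(T-r)^{-1/2}\big[(T-r)^{1/2}l_n(r)\big]\,dr$. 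This is exactly the structure treated in Proposition \ref{prop:linear_growth1}, now written for the weighted quantity $(T-r)^{1/2}l_n(r)$.

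Then I would iterate once, exchange the order of integration, and use the Euler integral $\int_t^s(r-t)^{-1/2}(s-r)^{-1/2}\,dr=\pi$, concluding by the generalized Gronwall Lemma precisely as in Proposition \ref{prop:linear_growth1}. Since the only quantities entering the final bound are $C_T$, $L_\psi$, $T$ and $\norma\varphi_\infty$, all of which are independent of $n$, the resulting estimate for $\sup_{t\in[0,T)}(T-t)^{1/2}l_n(t)=[v_n]_{\Kk_T}$ is uniform in $n$, which is the assertion.

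The only genuinely new point with respect to Proposition \ref{prop:linear_growth1} is the bookkeeping of constants, namely checking that mollification preserves the Lipschitz constant $L_\psi$ exactly and keeps $\psi_n(\cdot,0)$ uniformly bounded; the analytic core (the beta-type integral and the generalized Gronwall inequality) is identical. I therefore do not expect a real obstacle beyond making the $n$-independence of every constant explicit.
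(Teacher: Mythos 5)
Your proposal is correct and is essentially the paper's own argument: the paper's proof of this lemma is precisely the one-line observation that, since $\abs{\psi_n(x,0)}\leq L_\psi$ uniformly in $n$ (and $\norma{\varphi_n}_\infty\leq\norma{\varphi}_\infty$), the a priori computations used for the unapproximated problem can be repeated verbatim with constants independent of $n$, which is exactly what you spell out (mollification preserving $L_\psi$, the beta-type integral, and the generalized Gronwall lemma). The only minor discrepancy is that the paper refers these computations to Proposition \ref{prop:uniqueness_1}, almost certainly a slip for Proposition \ref{prop:linear_growth1}, which is the estimate you correctly identify and reproduce.
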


\begin{proof}[Proof]
Let $t\in[0,T)$. Since $\abs{\psi_n(x,0)}\leq L_\psi$, the same computations of Proposition \ref{prop:uniqueness_1} yield to the thesis.
\end{proof}

\begin{thm}
Suppose that Hypotheses \ref{hyp:g_lipschitz}, \ref{hyp:maximum_principle} and
\ref{hyp:growth_FBSDE} hold. Moreover, let $\varphi\in BUC(\R^N)$. Then, for any
$t\in[0,T)$, $v_n(t,\cdot)$ and $G\nabla v_n(t,\cdot)$ converge
uniformly to $v(t,\cdot)$ and $G(\cdot)\nabla v(t,\cdot)$
respectively.
Moreover, $(X,Y,Z)$ is a solution to \eqref{eq:FBSDE}, where $Y$ and $Z$ are
defined by \eqref{eq:identification_formulae}.
\label{thm:unif_convergence_vn_gradvn}
\end{thm}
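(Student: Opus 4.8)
The plan is to prove the two assertions separately. First I establish the uniform convergence of $v_n$ and $G\nabla v_n$ by the same weakly singular Gronwall machinery used for Proposition \ref{prop:uniqueness_1}; then I identify $(X,Y,Z)$ as a solution of \eqref{eq:FBSDE} by a backward-SDE energy estimate which upgrades that convergence to the $L^2$ level needed to pass to the limit in the equation.

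For the convergence, I would subtract the mild formulas \eqref{eq:mild_solution_n} for $v_n$ and for $v$ and estimate the difference as in the uniqueness proof. Setting $\ell_n(t):=\norma{G\nabla(v_n-v)(t,\cdot)}_\infty$ and $\gamma_n(t):=(T-t)^{1/2}\ell_n(t)$, the gradient bound \eqref{eq:stimadergrad0} together with the Lipschitz estimate \eqref{eq:prop_psi_n}, which gives $\norma{F_n(r,v_n)-F(r,v)}_\infty\le L_\psi\ell_n(r)+L_\psi/n$, leads after the usual interchange of integration (producing the Beta-integral $\int_t^s(r-t)^{-1/2}(s-r)^{-1/2}dr=\pi$) to an inequality of the form
\[
\gamma_n(t)\le a_n+\pi C_T^2L_\psi^2\,(T-t)^{1/2}\int_t^T(T-s)^{-1/2}\gamma_n(s)\,ds,
\]
with source $a_n=C_T\norma{\varphi_n-\varphi}_\infty+2C_TL_\psi T/n$ independent of $t$. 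Here the hypothesis $\varphi\in BUC(\R^N)$, rather than merely $C_b(\R^N)$, is essential: it forces $\norma{\varphi_n-\varphi}_\infty\to0$, so $a_n\to0$. The generalized Gronwall Lemma (as used in Proposition \ref{prop:linear_growth1}) then yields $\sup_t\gamma_n(t)\le M a_n\to0$, i.e. $G\nabla v_n(t,\cdot)\to G\nabla v(t,\cdot)$ uniformly. Feeding this back into the mild formula and using $\int_t^T(T-r)^{-1/2}dr<\infty$ gives the uniform convergence $v_n(t,\cdot)\to v(t,\cdot)$ as well; the uniform boundedness of $[v_n]_{\Kk_T}$ from Lemma \ref{lem:unif_boundedness_vn} guarantees all quantities stay finite along the way.

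For the identification, recall that by the cited results of \cite{pardoux-peng} the smooth problem \eqref{eq:FBSDE_n} has solution $Y^n(s,t,x)=v_n(s,X(s,t,x))$ and $Z^n(s,t,x)=G(X(s,t,x))\nabla v_n(s,X(s,t,x))$ with $(Y^n,Z^n)\in\K$. I would show $(Y^n,Z^n)$ is Cauchy in $\K$ by applying It\^o's formula to $|Y^n-Y^m|^2$: using \eqref{eq:prop_psi_nm} and Young's inequality to absorb half of the $Z$-term one gets
\[
\E\abs{Y^n_s-Y^m_s}^2+\tfrac12\E\int_s^T\abs{Z^n_\tau-Z^m_\tau}^2d\tau\le \E\abs{\varphi_n(X_T)-\varphi_m(X_T)}^2+c\,\E\int_s^T\abs{Y^n_\tau-Y^m_\tau}^2d\tau+c\Big(\tfrac1n+\tfrac1m\Big)^2 .
\]
Since $\E\abs{\varphi_n(X_T)-\varphi_m(X_T)}^2\le\norma{\varphi_n-\varphi_m}_\infty^2\to0$, backward Gronwall gives $\sup_s\E|Y^n_s-Y^m_s|^2\to0$ and hence $\E\int_s^T|Z^n-Z^m|^2\to0$ (the $\E\sup$ part of the $\K$-norm follows by a Burkholder–Davis–Gundy bound on the martingale term). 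Thus $(Y^n,Z^n)\to(\bar Y,\bar Z)\in\K$. I then identify the limit using Part 1: $\sup_s\norma{v_n(s,\cdot)-v(s,\cdot)}_\infty\to0$ gives $\bar Y_s=v(s,X_s)=Y_s$, while $Z^n_\tau=G(X_\tau)\nabla v_n(\tau,X_\tau)\to G(X_\tau)\nabla v(\tau,X_\tau)=Z_\tau$ pointwise for $\tau<T$, which matches the $L^2$-limit $\bar Z$ along a subsequence, so $\bar Z=Z$ and in particular $(Y,Z)\in\K$. Passing to the limit in \eqref{eq:FBSDE_n} — the stochastic integral by It\^o isometry, the drift via \eqref{eq:prop_psi_n} and Cauchy–Schwarz, the terminal datum uniformly — yields the integrated backward equation of \eqref{eq:FBSDE}; as $X$ already solves the forward part, $(X,Y,Z)$ solves \eqref{eq:FBSDE}.

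The main obstacle is precisely the $L^2$ convergence of $Z^n$. The crude pointwise bound only gives $\abs{Z^n_\tau-Z_\tau}\lesssim (T-\tau)^{-1/2}\gamma_n(\tau)$, and since $(T-\tau)^{-1}$ is not integrable near $\tau=T$ the uniform convergence of Part 1 does \emph{not} by itself produce convergence in $L^2(\Omega\times[t,T])$ — indeed it does not even show $Z\in\K$, because the sup-norm of $G\nabla v(\tau,\cdot)$ blows up like $(T-\tau)^{-1/2}$ for $\varphi$ merely in $BUC(\R^N)$. The required square-integrability has to be extracted from the BSDE energy identity above, which controls $\E\int|Z^n-Z^m|^2$ by the terminal and drift data rather than by the singular weighted gradient estimate; this is the step where the probabilistic structure, and not only the analytic estimates, is indispensable.
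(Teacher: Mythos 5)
Your proposal is correct, and its first half (the weighted Gronwall argument giving $[v_n-v]_{\Kk_T}\to0$ and hence the uniform convergence of $v_n$ and $G\nabla v_n$) is essentially identical to the paper's. Where you genuinely diverge is in the key step of the probabilistic part: how to get the $L^2(\Omega\times(0,T))$-Cauchy property of $Z^n$ out of the It\^o energy identity for $|\overline Y^{n,m}|^2$. You absorb the quadratic $Z$-difference by Young's inequality, $2L_\psi|\overline Y^{n,m}||\overline Z^{n,m}|\le 2L_\psi^2|\overline Y^{n,m}|^2+\tfrac12|\overline Z^{n,m}|^2$ (legitimate, since $(Y^n,Z^n)\in\K$ for each fixed $n$ makes the absorbed term finite), and then close the estimate by a backward Gronwall inequality on $\E|\overline Y^{n,m}_s|^2$, so the smallness comes purely from the data $\norma{\varphi_n-\varphi_m}_\infty$ and $1/n+1/m$. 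The paper instead bounds $|\overline Y^{n,m}|$ crudely by $2\sup_n\norma{v_n}_\infty$, reducing the cross term to $c\bigl(\E\int_\tau^T|\overline Z^{n,m}_\sigma|\,d\sigma+T/n+T/m\bigr)$, and then makes the $L^1$-norm of $\overline Z^{n,m}$ small by dominated convergence: the analytic convergence of Part~1 gives pointwise convergence $Z^n_\sigma\to Z_\sigma$, dominated by $C(T-\sigma)^{-1/2}$, which is integrable though not square-integrable. So the paper's Cauchy argument reuses the analytic result a second time, upgrading $L^1$-smallness to $L^2$-smallness through the energy identity, whereas yours is the standard self-contained BSDE stability estimate, which needs Part~1 only to identify the limit $(\bar Y,\bar Z)$ with $(v(\cdot,X),G(X)\nabla v(\cdot,X))$. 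Your route is arguably cleaner and gives convergence in the full $\K$-norm directly; both approaches correctly address the obstacle you single out, namely that the pointwise bound $|Z^n_\sigma-Z_\sigma|\lesssim(T-\sigma)^{-1/2}\gamma_n(\sigma)$ alone cannot yield square-integrability near $\sigma=T$.
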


\begin{proof}[Proof]
As usual, first we prove the convergence of $G\nabla
v_n$, since it is involved in the definition of $v_n$. To simplify the notations, we set
\[
h_n(t):=\norma{G\nabla v_n(t,\cdot)-G\nabla
v(t,\cdot)}_\infty.
\]

We have
\[
\begin{split}
& (T-t)^{1/2}h_n(t) \\
& \quad \leq
C_T\norma{\varphi_n-\varphi}_\infty+C_T(T-t)^{1/2}\int_t^T(r-t)^{-1/2}\norma{F_n(r,
v_n)-F(r,v)}_\infty dr \\
& \quad \leq
C_T\norma{\varphi_n-\varphi}_\infty+C_T(T-t)^{1/2}\int_t^T(r-t)^{-1/2}\norma{F_n(r,
v_n)-F(r,v_n)}_\infty dr \\
& \qquad +C_T(T-t)^{1/2}\int_t^T(r-t)^{-1/2}\norma{F(r,v_n)-F(r,v)}_\infty ds \\
& \quad =: I^n_1+I^n_2(t) \\
& \qquad +C_TL_\psi(T-t)^{1/2}\int_t^T(r-t)^{-1/2}(T-r)^{-1/2}(T-r)^{1/2}h_n(t) dr.
\end{split}
\]

Now we use the estimate
\[
I^n_2(t)\leq C_T\frac{L_\psi}n\int_t^T(r-t)^{-1/2}dr=2C_T\frac{L_\psi}nT^{1/2},
\]
which follows from \eqref{eq:prop_psi_n} with $z_1=z_2$ and holds for any $t\in[0,T)$. Hence
\[
\begin{split}
& \quad \leq I^n_1+2C_T\frac{L_\psi}nT^{1/2} \\
& \qquad +C_TL_\psi(T-t)^{1/2}\int_t^T(r-t)^{-1/2}(T-r)^{-1/2}\left(I^n_1+2C_T\frac{L_\psi}nT^{1/2}\right)dr \\
& \qquad +C_T^2L_\psi^2(T-t)^{1/2}\int_t^T(r-t)^{-1/2}\left(\int_r^T(r-s)^{-1/2}(T-s)^{-1/2}((T-s)^{1/2}h_n(r) ds\right)dr \\
& \quad \leq \left(I^n_1+2C_T\frac{L_\psi}nT^{1/2}\right)(1+\pi C_TL_\psi T^{1/2}) \\
& \qquad +\pi C_T^2L_\psi^2(T-t)^{1/2}\int_t^T(T-s)^{-1/2}(T-s)^{1/2}h_n(r) ds.
\end{split}
\]

Since $\varphi\in BUC(\R^N)$, $I^n_1$ tends to zero, as $n\rightarrow+\infty$. Clearly, also
\[
2C_T\frac{L_\psi}nT^{1/2}
\]
vanishes as $n\rightarrow \infty$

Now we apply the generalized Gronwall Lemma to the function
\[
(T-t)^{1/2}\norma{G\nabla v_n(t,\cdot)-G\nabla
v(t,\cdot)}_\infty,
\]
and we obtain
\[
(T-t)^{1/2}\norma{G\nabla v_n(t,\cdot)-G\nabla v(t,\cdot)}_\infty
\leq\left(I^n_1+2C_T\frac{L_\psi}nT^{1/2}\right)\exp{\left(\pi C_T^2L_\psi^2T\right)},
\]
and the right-hand side tends to zero, as $n\rightarrow+\infty$.

Using the fact that $[v_n-v]_{\Kk_T}$ tends to zero, similar computations yield
the uniformly convergence of $v_n(t,\cdot)$ to $v(t,\cdot)$, for any
$t\in[0,T]$.

Finally, we prove that the processes $Y,Z$ defined in
\eqref{eq:identification_formulae} are solutions to \eqref{eq:FBSDE}.
Since $Y_n,Z_n$ are solutions of \eqref{eq:FBSDE_n}, and the equalities hold
$\Pp-$a.s., there exists a family of elements of $\F$, $\{\Omega_n\}$,
such that each of them has zero measure. Moreover, if we set
$\tilde\Omega=\cup_n\Omega_n$, then $\Pp(\tilde\Omega)=0$, and in
$\tilde\Omega^c$ \eqref{eq:FBSDE_n} holds, for any
$n\in\N$.

Now we fix $x\in\R^N$, $t\in[0,T]$, set $X_\tau:=X(\tau,t,x)$, and define
\[
Y_\tau=v(\tau,X_\tau), \
Y^n_\tau=v^n(\tau,X_\tau), \  Z_\tau=G(X_\tau)\nabla
v(\tau,X_\tau), \
Z^n_\tau=G(X_\tau)\nabla v_n(\tau,X_\tau),
\]
for any $\tau\in[t,T]$. The previous estimates guarantee that
\[
Y^n_\tau \longrightarrow Y_\tau, \quad \varphi_n(X_T) \longrightarrow\varphi(X_T),
\]
uniformly in $\Omega$, and
\[
\int_\tau^T\psi_n(X_\sigma,Z^n_\sigma)d\sigma \longrightarrow
\int_\tau^T\psi(X_\sigma,Z_\sigma)d\sigma.
\]

Indeed, by
\eqref{eq:prop_psi_nm} we deduce that
\[
\begin{split}
\abs{\psi_n(X_\sigma,Z^n_\sigma)-\psi(X_\sigma,Z_\sigma)} & \leq
L_\psi\abs{Z^n_\sigma-Z_\sigma}+\frac{L_\psi}n, \\
\abs{\psi(X_\sigma,Z_\sigma)},\ \abs{\psi_n(X_\sigma,Z^n_\sigma)} &\leq
L_\psi C(1+(T-\sigma)^{-1/2}),
\end{split}
\]
for any $x\in\R^N$ and $\sigma\in[\tau,T)$. Since $\abs*{Z^n_\sigma-Z_\sigma}$
tends to zero uniformly in $\Omega$, as $n\rightarrow+\infty$, and $\abs{\psi(X_\sigma,Z_\sigma)},\
\abs{\psi_n(X_\sigma,Z^n_\sigma)}$ can be estimated by an integrable function, we
can apply dominated convergence to the integral term.

It remains to prove the convergence of $\int_\tau^T Z^n_\sigma dW_\sigma$ to
$\int_\tau^T Z_\sigma dW_\sigma$. At first, we prove that $\int_\tau^T
Z_\sigma dW_\sigma$ makes sense, since this is not guaranteed by previous
estimates, which show only that the growth $Z_\sigma$ can be estimated by
$(T-\sigma)^{-1/2}$, which is not square integrable in $T$.

We are going to show that $\{Z^n_\tau\}$ is a Cauchy sequence in the space
$L^2(\Omega\times(0,T))$, the space of the square integrable processes $V$,
endowed with the norm $\E\int_0^T\abs{V_\sigma}^2d\sigma$. Since this is a
Hilbert space, $\{Z^n_\tau\}$ converges to a process $\tilde Z_\tau$ which is
square integrable, and so, up to a subsequence, $\{Z^n_\tau\}$ converges to
$\tilde Z_\tau$ $[0,T]\otimes\Pp-$a.s. But $\{Z^n_\tau\}$ converges to $Z_\tau$
uniformly, hence pointwise, for any $\tau\in[0,T]$. Therefore, $\tilde
Z_\tau=Z_\tau$ $\Pp-$a.s., for almost every $\tau\in[0,T]$. This means
that $Z_\sigma$ is a square integrable process.

For the reader's convenience, we introduce some new notations:
\[
\begin{split}
\overline Y^{n,m}_\sigma & :=Y^n_\sigma-Y^m_\sigma, \\
\overline Z^{n,m}_\sigma & :=Z^n_\sigma-Z^m_\sigma, \\
\overline\varphi^{n,m}_\sigma & := \varphi_n(X_\sigma)-\varphi_m(X_\sigma), \\
\overline\psi^{n,m}_\sigma & :=
\psi_n(X_\sigma,Z^n_\sigma)-\psi_m(X_\sigma,Z^m_\sigma),
\end{split}
\]
for any $n,m\in\N$, $\sigma\in[0,T]$. By the It\^o formula, we get
\[
\begin{split}
d\abs{\overline Y^{n,m}_\tau}^2=-2\overline
Y^{n,m}_\tau\overline \psi^{n,m}_\tau d\tau
-2\overline Y^{n,m}_\tau\overline Z^{n,m}_\tau dW_\tau+\abs{\overline
Z^{n,m}_\tau}^2d\tau,
\end{split}
\]
and, recalling that $\overline Y^{n,m}_T=\overline\varphi^{n,m}_T$, we obtain
\[
\abs{\overline Y^{n,m}_\tau}^2+\int_\tau^T\abs{\overline Z^{n,m}_\sigma}^2
d\sigma=\abs{\overline\varphi^{n,m}_T}^2
-2\int_\tau^T\overline Y^{n,m}_\sigma\overline\psi^{n,m}_\sigma d\sigma
-2\int_\tau^T \overline Y^{n,m}_\sigma \overline Z^{n,m}_\sigma dW_\sigma.
\]

Let us estimate the terms in the right-hand side. Note that
$(Y^n,Z^n),(Y^m,Z^m)\in\K$, since they are solutions of a backward stochastic
differential equation. Hence, the process $I_\tau=\int_0^\tau\overline
Y^{n,m}_\sigma \overline Z^{n,m}_\sigma dW_\sigma$ is a martingale. Indeed
\[
\begin{split}
\E\left(\int_0^\tau\abs{\overline Y^{n,m}_\sigma \overline
Z^{n,m}_\sigma}^2d\sigma\right)^{1/2}
& \leq c\E\left(\sup_{\tau\in[0,T]}\abs{\overline
Y^{n,m}_\tau}^2+\int_0^\tau\abs{\overline Z^{n,m}_\sigma}^2
d\sigma\right)<+\infty.
\end{split}
\]

In particular $\E I_\tau=0$, for any $\tau$. Computing the expectation, we get
\begin{equation}
\E\abs{\overline Y^{n,m}_\tau}^2+\E\int_\tau^T\abs{\overline Z^{n,m}_\sigma}^2
d\sigma=\E\abs{\overline\varphi^{n,m}_T}^2
-2\E\int_\tau^T\overline Y^{n,m}_\sigma\overline\psi^{n,m}_\sigma d\sigma.
\label{eq:exp_nm}
\end{equation}

Moreover, by \eqref{eq:prop_psi_nm}, the last term in the right-hand side of \eqref{eq:exp_nm} can be estimated as
follows:
\[
\begin{split}
\E\int_\tau^T\abs{\overline Y^{n,m}_\sigma\overline\psi^{n,m}_\sigma} d\sigma
& \leq \E\left(\sup_{\tau\in[0,T]}\abs{\overline Y^{n,m}_\tau}
\int_\tau^T\abs{\overline \psi^{n,m}_\sigma} d\sigma \right)\\
& \leq 2L_\psi\sup_{n\in\N}\norma {v_n}_\infty
\left(\E\int_\tau^T\abs{\overline Z^{n,m}_\sigma}d\sigma+\frac Tm+\frac
Tn\right) \\
& \leq c\left(\E\int_\tau^T\abs{\overline Z^{n,m}_\sigma}d\sigma+\frac Tm+\frac
Tn\right).
\end{split}
\]

By the definitions of $Z^n,Z^m,\overline Z^{n,m}$ and the above
estimates, it is easy to prove, using dominated convergence, that, for any
$\epsilon>0$, there exists $\bar n\in\N$ such that $\E\int_0^T\abs{\overline
Z^{n,m}_\sigma}d\sigma\leq \epsilon$, for any $n,m\geq \bar n$.

The same arguments can be applied to $\overline\varphi^{n,m}_T$. Indeed, recalling
that $\varphi$ is uniformly continuous, for any $\epsilon>0$ there exists $\bar
n\in\N$ such that $\E\abs{\overline\varphi^{n,m}_T}^2\leq \epsilon$, for any
$n,m\geq \bar n$.

Hence $\{Z^n_\tau\}$ is a Cauchy sequence, and this implies that $\int_\tau^T
Z_\sigma dW_\sigma$ makes sense. Moreover, since $Z^n$ converges to $Z$ in
$L^2(\Omega\times(0,T))$, we see that
\[
\E\abs*{\int_\tau^T(Z^n_\sigma-Z_\sigma)
dW_\sigma}^2\longrightarrow 0, \quad n\rightarrow\infty.
\]

We can conclude that $\int_\tau^TZ^n_\sigma dW_\sigma$ tends to
$\int_\tau^TZ_\sigma dW_\sigma$ $\Pp-$a.s., and passing to the limit
\eqref{eq:FBSDE_n}, we obtain that the processes $(X,Y,Z)$ are a solution to
\eqref{eq:FBSDE} $\Pp-$a.s.
\end{proof}

\section{An application to the Stochastic Optimal Control in Weak Formulation}

In this section we consider the controlled equation \index{controlled equation}
\begin{equation}
\left\{
\begin{array}{ll}
d_\tau X_\tau=B(X_\tau)d\tau+G(X_\tau)r(X_\tau,u_\tau)d\tau+G(X_\tau)dW_\tau, &
\tau\in[t,T], \\
\\
X_t=x\in\R^N,
\end{array}
\right.
\label{eq:controlled_eq1}
\end{equation}
and the cost functional
\begin{equation}
\E\int_0^Tl(X_t,u_t)dt+\E\varphi(X_T),
\label{eq:cost_functional1}
\end{equation}
where $u$ is a progressive measurable stochastic process with values in some specified set
$\U\subset\R^N$, $r:\R^N\times \U\longrightarrow \R$, $W$ is a $\R^N-$valued
cylindrical Wiener process, and $l:\R^N\times\U\longrightarrow \R$.
Our purpose is to minimize over all admissible controls the cost functional.

We assume the following hypotheses on $l$ and $r$:
\begin{hyp}
There exists $C>0$ such that for all $x,x'\in\R^N,t\in[0,T],u,u'\in\U$, we have
\begin{equation}
\begin{split}
\abs{l(x,u)-l(x',u')}+\abs{r(x,u)-r(x',u')} & \leq
C\left(\abs{x-x'}+\abs{u-u'}\right), \\
\abs{l(x,u)}+\abs{r(x,u)} & \leq C.
\end{split}
\label{hyp:growth_ocp}
\end{equation}
\end{hyp}

\begin{defin}
An admissible control system (acs) $\Uu$ is the set
\[
\Uu=(\widehat\Omega,\widehat\F,(\widehat \F_t)_{t\geq0},\widehat\Pp,\widehat
u, \widehat W, \widehat X),
\]
where $(\widehat\Omega,\widehat\F,\Pp)$ is a probability space, the filtration
$(\widehat\F_t)_{t\geq0}$ verifies the usual conditions, the process $\widehat
W:[0,T]\times\widehat\Omega\longrightarrow\R^N$ is a Wiener process with respect
to $(\widehat\F_t)_{t\geq0}$, $\widehat u$ is progressive measurable with respect to
the filtration $(\widehat\F_t)_{t\geq0}$, and $\widehat X_\tau$ is a
solution to
\[
\widehat X_\tau=x+\int_t^\tau B(\widehat X_\sigma)d\sigma+\int_t^\tau
G(\widehat X_\sigma)r(\widehat X_\sigma,\widehat
u_\sigma)d\sigma+\int_t^\tau
G(\widehat X_\sigma)d\widehat W_\sigma, \quad \tau\in[t,T].
\]
\label{defin:acs}
\end{defin}

In this setting, the cost functional has the form
\begin{equation}
J(t,x,\Uu)=\widehat\E\int_t^Tl(\widehat X_\sigma,\widehat
u_\sigma)d\sigma+\widehat\E\varphi(\widehat X_T).
\label{eq:cost_functional_WF}
\end{equation}

An acs is called {\it{optimal}} for the control problem starting from $x$ at the
time $t$, if it minimizes $J(t,x,\cdot)$, and the minimum value of the cost is
called the {\it{optimal cost}}. Finally, we introduce the value function
$V:[0,T]\times\R^N\rightarrow\R$, defined by
\begin{equation}
V(t,x):=\inf_{u\in\Uu}J(t,x,u), \quad t\in[0,T], \quad x\in\R^N.
\label{eq:value_function}
\end{equation}

The Hamiltonian function of the problem, defined below, is crucial in the
analysis of the stochastic control problem.

\begin{defin}
The function $\psi:\R^N\times\R^N\longrightarrow\R$, defined by
\begin{equation}
\psi(x,z)=\inf_{u\in\U}\{l(x,u)+zr(x,u)\},
\label{eq:ham_function}
\end{equation}
is called {\it{Hamiltonian function}}.
\label{defin:ham_function}
\end{defin}

\begin{lem}
There exists a positive constant $c$ such that
\[
\begin{split}
\abs{\psi(x,0)} & \leq c, \\
\abs{\psi(x,z)-\psi(x',z')} & \leq c\abs{z-z'}+c\abs{x-x'}\left(1+\abs z+\abs{z'}\right),
\end{split}
\]
for any $x,x',z,z'\in\R^N$.
\label{lem:_prop_ham_function}
\end{lem}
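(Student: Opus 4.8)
The plan is to deduce both inequalities from the elementary observation that, for any two functions $f,g$ bounded on $\U$,
\[
\abs*{\inf_{u\in\U}f(u)-\inf_{u\in\U}g(u)}\le\sup_{u\in\U}\abs{f(u)-g(u)},
\]
combined with the global boundedness and Lipschitz continuity of $l$ and $r$ recorded in \eqref{hyp:growth_ocp}. The boundedness assumption $\abs{l}+\abs{r}\le C$ guarantees that, for fixed $x,z$, the map $u\mapsto l(x,u)+zr(x,u)$ is bounded, so the infimum defining $\psi(x,z)$ is finite and the displayed inequality applies.

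For the first estimate I would simply note that $\psi(x,0)=\inf_{u\in\U}l(x,u)$; since $\abs{l(x,u)}\le C$ for every $u$, the infimum lies in $[-C,C]$ and hence $\abs{\psi(x,0)}\le C$. For the second estimate I would apply the infimum inequality above with $f(u)=l(x,u)+zr(x,u)$ and $g(u)=l(x',u)+z'r(x',u)$, obtaining
\[
\abs{\psi(x,z)-\psi(x',z')}\le\sup_{u\in\U}\Big(\abs{l(x,u)-l(x',u)}+\abs{zr(x,u)-z'r(x',u)}\Big).
\]
The first summand is bounded by $C\abs{x-x'}$, taking $u'=u$ in the Lipschitz bound of \eqref{hyp:growth_ocp}. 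For the second I would add and subtract $z'r(x,u)$ and use the Cauchy--Schwarz inequality together with $\abs{r}\le C$ and the Lipschitz estimate on $r$:
\[
\abs{zr(x,u)-z'r(x',u)}\le\abs{(z-z')r(x,u)}+\abs{z'(r(x,u)-r(x',u))}\le C\abs{z-z'}+C\abs{z'}\abs{x-x'}.
\]
Collecting the two contributions and bounding $\abs{z'}\le\abs z+\abs{z'}$ gives the assertion with $c=C$.

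There is no genuine obstacle here. The only point deserving a word is the asymmetry created by the splitting, which makes $\abs{z'}$ (rather than $\abs z$) appear; the symmetric factor $1+\abs z+\abs{z'}$ in the statement absorbs this choice, so no further work is needed. What makes the constants uniform in all of $x,x',z,z'$ and $u$ is precisely that \eqref{hyp:growth_ocp} provides boundedness and Lipschitz continuity of $l$ and $r$ that are global and independent of the control variable.
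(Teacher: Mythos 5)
Your proof is correct and essentially the same as the paper's: the inf--inf inequality $\abs{\inf f-\inf g}\le\sup\abs{f-g}$ that you invoke is exactly what the paper proves inline by bounding $l(x,u)+zr(x,u)$ against $l(x',u)+z'r(x',u)$, taking the infimum over $u$, and exchanging $(x,z)$ with $(x',z')$, and your splitting of $zr(x,u)-z'r(x',u)$ by adding and subtracting $z'r(x,u)$ is the same decomposition used there. The only difference is that you also spell out the (trivial) first inequality, which the paper omits.
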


\begin{proof}[Proof]
The result is well known, we report the proof for the reader's convenience.
We prove only the second inequality. For all $u\in\U$ we have
\[
\begin{split}
l(x,u)+zr(x,u)
& \leq l(x',u)+z'r(x',u)+\abs{l(x,u)-l(x',u)} \\
& \quad +\abs{zr(x,u)-z'r(x',u)} \\
& \leq l(x',u)+z'r(x',u)+\abs{l(x,u)-l(x',u)} \\
& \quad +\abs{zr(x,u)-z'r(x,u)}+\abs{z'r(x,u)-z'r(x',u)} \\
& \leq l(x',u)+z'r(x',u)+c\abs{x-x}+c\abs{z-z'}+c\abs{x-x'}\abs{z'}.
\end{split}
\]

Taking the infimum over $u$ and exchanging $x,z$ with $x',z'$ we get the conclusion.
\end{proof}

To prove the main theorem of this section, we need the following hypothesis:
\begin{hyp}
\label{hyp:min_acs}
For any $x,z\in\R^N$, the minimum in \eqref{eq:ham_function} is attained.
\end{hyp}

\begin{rmk}
\label{rmk:min_ham_function}
The minimum in \eqref{eq:ham_function} is always attained if $\U$ is a compact
set, see \cite{aubin-fran}.
\end{rmk}

\begin{rmk}
\label{rmk:ex_gamma_function}
If Hypothesis \ref{hyp:min_acs} is satisfied, then Filippov Theorem guarantees
that there exists a measurable function $\gamma:\R^N\times \R^N\rightarrow\U$
such that
\begin{equation}
\psi(x,z)=l(x,\gamma(x,z))+zr(x,\gamma(x,z)), \quad \forall x,z\in\R^N.
\label{eq:min_acs}
\end{equation}
\end{rmk}

Section \ref{sec:BPDE}
assures that the Hamilton Jacobi Bellman equation, associated to the problem
\eqref{eq:controlled_eq1} and \eqref{eq:cost_functional1}, admits a unique
solution $v$ in the space $\Kk_T$. We stress that this solution has a good
regularity, but not the optimal one; hence, we can not use the It\^o formula.
However, the BSDE's techniques enable us to prove that $v$ is indeed the value
function  of the problem, and has enough regularity to identify the optimal
feedback law.

\begin{thm}
Let Hypotheses \ref{hyp:g_lipschitz}, \ref{hyp:maximum_principle},
\ref{hyp:growth_FBSDE}, \ref{hyp:min_acs} and \ref{hyp:growth_ocp} hold. Moreover, let $\varphi\in BUC(\R^N)$. Then the following properties are satisfied:
\begin{description}

\item [(i)] there exists a unique solution $v$ of HJB such that $v\in \Kk_T$. Hence,
$G(x)\nabla v(t,x)$ is defined for any $t\in[0,T),x\in\R^N$;

\item [(ii)] $v(t,x)\leq V(t,x)$, for any $t\in[0,T],x\in\R^N$;

\item [(iii)] $v(t,x)=V(t,x)$ if and only if there exists an acs $\Uu^*$
such that
\begin{equation}
\psi(X^{\Uu^*}_t,Z_t)=l(X^{\Uu^*}_t,u^*_t)+Z_tr(X^{\Uu^*}_t,u^*_t),
\label{eq:cond_esist_min}
\end{equation}
where $X^{\Uu^*}_t$ is the solution to \eqref{eq:controlled_eq1}, with $u=u^*$;

\item [(iv)] there exists an acs $U^\#$ such that \eqref{eq:cond_esist_min} is
satisfied.
\end{description}
\end{thm}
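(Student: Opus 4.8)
The plan is to obtain (i) as a direct corollary of the analysis of Sections \ref{sec:BPDE} and \ref{sec:FBSDE}. By Lemma \ref{lem:_prop_ham_function} the Hamiltonian $\psi$ defined in \eqref{eq:ham_function} satisfies Hypothesis \ref{hyp:g_lipschitz}; since moreover $\varphi\in BUC(\R^N)\subset C_b(\R^N)$, Theorem \ref{thm:fixed_point_NL} together with Proposition \ref{prop:linear_growth1} produces a unique mild solution $v\in\Kk_T$ of the HJB equation \eqref{eq:eq_Cauchy_NL}, and by the very definition of $\Kk_T$ the map $(t,x)\mapsto G(x)\nabla v(t,x)$ is well defined for $t\in[0,T)$. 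This settles (i).

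For (ii) and (iii) I would establish a single \emph{fundamental relation}. Fix an acs $\Uu$ as in Definition \ref{defin:acs} and write $\widehat Z_\sigma=G(\widehat X_\sigma)\nabla v(\sigma,\widehat X_\sigma)$. Since $r$ is bounded by \eqref{hyp:growth_ocp}, Novikov's condition holds and Girsanov's theorem yields an equivalent measure $\widetilde\Pp$ under which $\widetilde W_\tau:=\widehat W_\tau+\int_t^\tau r(\widehat X_\sigma,\widehat u_\sigma)\,d\sigma$ is a Brownian motion; under $\widetilde\Pp$ the state $\widehat X$ solves the \emph{uncontrolled} forward equation of \eqref{eq:FBSDE} driven by $\widetilde W$. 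Theorem \ref{thm:unif_convergence_vn_gradvn} then identifies $(v(\cdot,\widehat X_\cdot),\widehat Z_\cdot)$ as the solution of the associated backward equation under $\widetilde\Pp$; rewriting that equation in terms of $\widehat W$ (so that the drift gains the term $\widehat Z_\sigma r(\widehat X_\sigma,\widehat u_\sigma)$) and taking $\widehat\E$ gives
\[
J(t,x,\Uu)-v(t,x)=\widehat\E\int_t^T\big[l(\widehat X_\sigma,\widehat u_\sigma)+\widehat Z_\sigma\, r(\widehat X_\sigma,\widehat u_\sigma)-\psi(\widehat X_\sigma,\widehat Z_\sigma)\big]\,d\sigma .
\]
By \eqref{eq:ham_function} the integrand is nonnegative, whence $v(t,x)\le J(t,x,\Uu)$; taking the infimum over all acs proves (ii). The same identity shows that $J(t,x,\Uu)=v(t,x)$ precisely when the integrand vanishes $d\sigma\otimes d\widehat\Pp$-a.e., i.e. exactly when $\widehat u_\sigma$ realizes the infimum in \eqref{eq:ham_function} along the trajectory. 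Combined with (ii), this characterization immediately yields the ``if'' direction of (iii); its ``only if'' direction is furnished by the acs constructed in (iv) below.

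To prove (iv) I would construct an acs saturating the Hamiltonian. By Hypothesis \ref{hyp:min_acs} and Remark \ref{rmk:ex_gamma_function} there is a measurable selector $\gamma$ with $\psi(x,z)=l(x,\gamma(x,z))+z\,r(x,\gamma(x,z))$. Because $\gamma$ is only measurable, the feedback $u^\#_\tau=\gamma(X_\tau,G(X_\tau)\nabla v(\tau,X_\tau))$ cannot be expected to yield a strong solution, so I work in the weak formulation: start from a solution $X$ of the uncontrolled forward equation on a filtered space carrying a Brownian motion $W$, define $u^\#$ as above, and apply Girsanov once more—using the boundedness of $r$ to ensure the exponential density is a true martingale—to build a measure $\widehat\Pp^\#$ under which $\widehat W_\tau=W_\tau-\int_t^\tau r(X_\sigma,u^\#_\sigma)\,d\sigma$ is Brownian and $X$ solves the controlled equation \eqref{eq:controlled_eq1} with control $u^\#$. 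The resulting acs $\Uu^\#$ satisfies \eqref{eq:cond_esist_min} by construction, so by the fundamental relation $J(t,x,\Uu^\#)=v(t,x)$; hence $\Uu^\#$ is optimal and $v=V$.

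The main obstacle is not the algebra but making the probabilistic representation rigorous in the controlled/weak setting. Two points require care. First, the identification of Theorem \ref{thm:unif_convergence_vn_gradvn} must be transported from the reference space of Section \ref{sec:FBSDE} to the Girsanov-transformed acs space; this is legitimate because under $\widetilde\Pp$ the process $\widehat X$ is genuinely an uncontrolled diffusion, so the pathwise relations defining $Y$ and $Z$ in \eqref{eq:identification_formulae} still hold. Second, $\widehat Z_\sigma$ is only controlled by $(T-\sigma)^{-1/2}$, which fails to be square integrable up to $T$, so both the martingale property of $\int_t^\cdot\widehat Z_\sigma\,d\widehat W_\sigma$ and the vanishing of its expectation must be justified through the $L^2(\Omega\times(0,T))$ square integrability of $Z$ established in the proof of Theorem \ref{thm:unif_convergence_vn_gradvn}. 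Once these integrability and change-of-measure issues are settled, parts (ii)--(iv) all follow from the fundamental relation.
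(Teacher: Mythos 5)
Your proposal is correct and follows essentially the same route as the paper: (i) from the fixed-point results of Section \ref{sec:BPDE}, (ii)--(iii) from the fundamental relation obtained by Girsanov's theorem plus the BSDE identification of Theorem \ref{thm:unif_convergence_vn_gradvn}, and (iv) via the Filippov selector and the closed-loop construction in weak formulation. If anything, you are slightly more careful than the paper on two points it glosses over: you make explicit the second Girsanov change of measure needed so that $W^{\#}$ is a Wiener process in the acs of (iv), and you note that the ``only if'' half of (iii) rests on the unconditional existence of the acs from (iv).
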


\begin{proof}
For the reader's convenience we report the proof, which is closed to the one in \cite{fuhr-tess2}.

$(i)$: since the HJB equation associated to \eqref{eq:controlled_eq1} and
\eqref{eq:cost_functional1} is \eqref{eq:eq_Cauchy_NL}, the existence and
uniqueness of the mild solution follow from Section \ref{sec:BPDE}.

$(ii)$: we fix an acs $\Uu$, $t\in[0,T]$, $x\in\R^N$, and consider the
equation
\[
X^{\Uu}_\tau=x+\int_t^\tau B(X^{\Uu}_\sigma)d\sigma+\int_t^\tau
G(X^{\Uu}_\sigma)r(X^{\Uu}_\sigma,u_\sigma)d\sigma+\int_t^\tau
G(X^{\Uu}_\sigma)dW_\sigma, \quad \tau\in[t,T].
\]

Since $r$ is bounded, by Girsanov Theorem there exists a probability measure $\widetilde\Pp$ such that
\[
\widetilde W_\tau=W_\tau+\int_t^{t\wedge\tau}r(X^{\Uu}_\sigma,u_\sigma)d\sigma
\]
is a Wiener process with respect to $\widetilde\Pp$, and $X^{\Uu}$ is a solution
to
\[
X^{\Uu}_\tau=x+\int_t^\tau B(X^{\Uu}_\sigma)d\sigma+\int_t^\tau
G(X^{\Uu}_\sigma)d\widetilde W_\sigma, \quad \tau\in[t,T].
\]

Notice that $X^{\Uu}$ is measurable with respect to the $\sigma-$field
generated by $\widetilde W$.
Now we introduce the backward equation
\[
\widetilde Y_\tau+\int_t^\tau \widetilde Z_\sigma d\widetilde W_\sigma =
\varphi(X^{\Uu}_T)+\int_t^\tau\psi(X^{\Uu}_\sigma,\widetilde
Z_\sigma) d\sigma.
\]

By the Theorem \ref{thm:unif_convergence_vn_gradvn} there exists a unique solution
$(\widetilde Y,\widetilde Z)$ of this equation. Writing the backward equation
with respect to $W$, we get
\begin{equation}
\widetilde Y_\tau+\int_\tau^T \widetilde Z_\sigma dW_\sigma
+\int_\tau^T\widetilde Z_\sigma r(X^{\Uu}_\sigma,u_\sigma)d\sigma=
\varphi(X^{\Uu}_T)+\int_\tau^T\psi(X^{\Uu}_\sigma,\widetilde Z_\sigma)d\sigma.
\label{eq:back_eq_BM}
\end{equation}

By easy computations, we have that
$\E\left(\int_0^T\abs{\widetilde Z_t}^2dt\right)^{1/2}<\infty$. Hence, taking
the expectation in \eqref{eq:back_eq_BM} with respect to $\Pp$ and $\tau=t$, we
obtain
\[
\widetilde Y_t=\E\varphi(X^{\Uu}_T)+\E\int_t^T
\left[\psi(X^{\Uu}_\sigma,\widetilde Z_\sigma)-\widetilde
Zr(X^{\Uu}_\sigma,u_\sigma)\right]d\sigma.
\]

Adding and subtracting $\E\int_t^T l(X^{\Uu}_\sigma,u_\sigma)d\sigma$, and
recalling that $v(t,x)=\widetilde Y(t,t,x)$, we get
\begin{equation}
v(t,x)=J(t,x,\Uu)+\E\int_t^T\left[\psi(X^{\Uu}_\sigma,\widetilde
Z_\sigma)-\widetilde Z_\sigma
r(X^{\Uu}_\sigma,u_\sigma)-l(X^{\Uu}_\sigma,u_\sigma)\right]d\sigma.
\label{eq:HJB_costfunctional}
\end{equation}

From the definition of $\psi$, the term in square brackets is non positive.
Hence $v(t,x)\leq J(y,x,\Uu)$ for any acs $\Uu$, and taking the minimum we
deduce that
\[
v(t,x)\leq V(t,x), \quad  t\in[0,T], \ x\in\R^N.
\]

$(iii)$: from \eqref{eq:HJB_costfunctional}, it is clear that
$v(t,x)=J(t,x,\Uu^*)$ if and only if the acs
$\Uu^*$ satisfies \eqref{eq:cond_esist_min}. In this case, the integral term in
\eqref{eq:HJB_costfunctional} is zero; hence
\[
v(t,x)\leq V(t,x)\leq J(t,x,\Uu^*)=v(t,x).
\]


$(iv)$: by Hypothesis \ref{hyp:min_acs} and \eqref{eq:identification_formulae}, it is natural to define
\[
\widetilde \gamma(x)=\gamma (x,G(x)\nabla v(t,x)), \ t\in[0,T), \ x\in\R^N.
\]

Notice that $\widetilde \gamma$ is, a priori, not regular. Let $W$ be an
$N-$dimensional Brownian Motion on $(\Omega,\F,\{\F_t\}_t,\Pp)$, and $X^\#$ be
the solution to
\[
\left\{
\begin{array}{ll}
dX^\#_\tau=B(X^\#_\tau)d\tau+G(X^\#_\tau)dW_\tau, & \tau\in[t,T], \\
\\
X(t)=x\in\R^N.
\end{array}
\right.
\]

For any $\tau\in[t,T]$, we set
\[
W^\#_\tau=W_\tau-\int_t^{t\wedge\tau}r(X^\#_\sigma,
\widetilde\gamma(X^\#_\sigma))d\sigma;
\]
then $X^\#$ satisfies the closed-loop equation
\[
X^\#_\tau=x+\int_t^\tau B(X^\#_\sigma)d\sigma+\int_t^\tau
G(X^\#_\sigma)r(X^\#_\sigma,\widetilde\gamma(X^\#_\sigma))d\sigma+\int_t^\tau
G(X^\#_\sigma)dW^\#_\sigma,
\]
for any $\tau\in[t,T]$. Clearly, $\Uu^\#=(\Omega,\F,\{\F_t\}_t,\Pp,\widetilde\gamma(X^\#),X^\#,w^\#)$ is
an acs with $u^\#=\widetilde\gamma(X^\#)$. Moreover, $u^\#$ satisfies
\eqref{eq:cond_esist_min}: indeed
\[
\begin{split}
\psi(X^\#_\tau,Z^\#_\tau)
& = l(X^\#_\tau,\gamma(X^\#_\tau,Z^\#_\tau))+Z^\#_\tau r(X^\#_\tau,\gamma(X^\#_\tau,Z^\#_\tau)) \\
& = l(X^\#_\tau,\widetilde\gamma(X^\#_\tau))+Z^\#_\tau r(X^\#_\tau,\widetilde\gamma(X^\#_\tau)) \\
& = l(X^\#_\tau,u^\#_\tau)+Z^\#_\tau r(X^\#_\tau,u^\#_\tau),
\end{split}
\]
where $Z^\#_\tau=G(X^\#_\tau)\nabla v(\tau,X^\#_\tau)$. Hence $\Uu^\#$ is an
optimal control system for the problem.
\end{proof}

\end{document}